\newtheorem{theorem}{Theorem}[subsection]
\newtheorem{lemma}[theorem]{Lemma}
\newtheorem{proposition}[theorem]{Proposition}
\newtheorem{corollary}[theorem]{Corollary}
\newtheorem{claim}[theorem]{Claim}
\newtheorem{fact}[theorem]{Fact}
\theoremstyle{definition}
\newtheorem{definition}[theorem]{Definition}
\newtheorem{notation}[theorem]{Notation}
\theoremstyle{remark}
\newtheorem{remark}[theorem]{Remark}
\newtheorem{example}[theorem]{Example}
\newcommand{\px}[2]{{#1}{\upharpoonright_{#2}}}
\newcommand{\cdf}{\mathrm{\textbf{cdf}} }
\newcommand{\Lip}{\mathrm{\textbf{Lip}} }
\newcommand{\zeroone}{[0,1] } 
\newcommand{\zn}{[0,1]^n }
\newcommand{\I}{\mathbb I }
\newcommand{\arpi}[2]{\Pi_{#2}^{#1}}
\newcommand{\arsigma}[2]{\Sigma_{#2}^{#1}}
\newcommand{\fcantor}{2^{<\omega}}
\newcommand{\cylinder}[1]{\left[ #1 \right]}
\newcommand{\real}{\mathbb{R}}
\newcommand{\lm}[1]{\lambda\left( #1 \right)}
\newcommand{\A}{\mathcal{A}}
\newcommand{\D}{\mathcal{D}}
\newcommand{\N}{\mathbb{N}}
\renewcommand{\P}{\mathcal{P}}
\newcommand{\Q}{\mathbb{Q}}
\newcommand{\R}{\mathbb{R}}
\newcommand{\NN}{{\mathbb{N}}}
\renewcommand{\P}{\mathcal P}
\newcommand{\bi}{\begin{itemize}}
\newcommand{\ei}{\end{itemize}}
\newcommand{\bc}{\begin{center}}
\newcommand{\ec}{\end{center}}
\renewcommand{\P}{\mathcal P}
\newcommand{\n}{\noindent}
\newcommand{\seq}[1]{(#1_i)_{i\in\NN}}
\begin{document}

\title{Computable randomness and monotonicity}

\author{Alex Galicki}


\maketitle

\begin{abstract}
We show that $z\in\R^n$ is computably random if and only if every computable monotone function on $\R^n$ is differentiable at $z$.
\end{abstract}


\section{Introduction}

Our main result is concerned with differentiability of monotone functions of several variables. Monotone functions are closely related to Lipschitz functions and they play a prominent role in variational analysis (see \cite{Rockafellar.Wets:97}) and in the theory of optimal transport (see \cite{Villani:03}). It is known that on the unit interval differentiability of computable monotone functions is equivalent to computable randomness. 

\begin{theorem}[Theorem 4.1 in \cite{Brattka.Miller.ea:nd}]\label{t_nies1} A real $z$ is computably random $\iff$ every computable nondecreasing function $f:\zeroone\to\real$ is differentiable at $z$.
\end{theorem}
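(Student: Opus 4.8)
\emph{Sketch of a proof of Theorem~\ref{t_nies1}.} The plan is to prove both implications through the martingale characterisation of computable randomness, using throughout the standard dictionaries between nondecreasing functions on $\zeroone$ and finite Borel measures (the Lebesgue--Stieltjes correspondence) and between computable finite Borel measures $\mu$ on $\zeroone$ and computable martingales on $\fcantor$ via $\sigma\mapsto 2^{|\sigma|}\mu(I_\sigma)$. Here $I_\sigma\subseteq\zeroone$ denotes the dyadic interval coded by $\sigma\in\fcantor$ and $I_n(z)=[a_n,b_n]$ the dyadic interval of length $2^{-n}$ containing $z$.

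For the direction ``differentiability at $z$ $\Rightarrow$ $z$ computably random'' I would argue contrapositively. Given that $z$ is not computably random, fix a computable martingale $M$ with $M(\emptyset)=1$ that succeeds on the binary expansion of $z$, so $\limsup_n M(z\upharpoonright n)=\infty$; after a standard modification preserving success one may also arrange that the measure $\nu$ with $\nu(I_\sigma)=2^{-|\sigma|}M(\sigma)$ --- well defined precisely by the martingale identity --- has no atoms, so that its cumulative distribution function $f(x)=\nu([0,x])$ is a nondecreasing \emph{computable} function on $\zeroone$. Since $\nu(I_n(z))=2^{-n}M(z\upharpoonright n)$ and $\nu$ is atomless, for every $n$ at least one of $f(z)-f(a_n)$ and $f(b_n)-f(z)$ is $\ge\tfrac12\,2^{-n}M(z\upharpoonright n)$; dividing by the corresponding length ($\le 2^{-n}$) and applying pigeonhole to whichever side occurs for infinitely many $n$ shows that one of the upper one-sided Dini derivatives $\overline D^+f(z)$, $\overline D^-f(z)$ equals $+\infty$, so $f$ is not differentiable at $z$.

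For the converse, let $f\colon\zeroone\to\reals$ be computable and nondecreasing; the constant case is trivial, so after an affine normalisation assume $f(0)=0$ and $f(1)=1$. Its Lebesgue--Stieltjes measure $\mu$ is then a computable probability measure, so $M(\sigma)=\mu(I_\sigma)/\lm{I_\sigma}=2^{|\sigma|}\mu(I_\sigma)$ is a computable martingale with $M(\emptyset)=1$, its martingale identity being additivity of $\mu$ over the two halves of $I_\sigma$. I would then invoke the effective Doob upcrossing argument: computable randomness of $z$ forces $M(z\upharpoonright n)$ to converge to a finite limit $D\ge 0$, since $\limsup_n M(z\upharpoonright n)=\infty$ would make $M$ itself succeed on $z$, while $\liminf_n<\limsup_n$ would let the computable martingale that mimics $M$'s bets only between a dip of $M$ below $p$ and the next rise above $q$ (for fixed rationals $p<q$ strictly between the two limits) multiply its capital by $q/p>1$ at each of the infinitely many upcrossings, hence succeed on $z$ --- either way contradicting randomness. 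Thus the dyadic derivative $\lim_n\mu(I_n(z))/\lm{I_n(z)}=D$ exists and is finite.

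The remaining step, which I expect to be the main obstacle, is to upgrade this to genuine differentiability $f'(z)=D$, i.e.\ to control $\mu([a,b])/(b-a)$ over \emph{all} intervals $[a,b]\ni z$ with $b-a\to 0$. Comparing $[a,b]$ with the dyadic cells near $z$ of length comparable to $b-a$ settles the case where $z$ sits well inside such a cell, but when $z$ is extremely close to a dyadic boundary --- equivalently, when its binary expansion has a long run at the relevant position --- this comparison can degrade by an unbounded factor, because $\mu([a,b])$ may then absorb mass from a neighbouring cell on which $M$ is large. The content of the theorem is that this cannot recur along a whole sequence $[a,b]\to z$ for computably random $z$: one must show that the set of $z$ at which the dyadic derivative of $f$ exists but $f'(z)$ does not is \emph{computably} null, by building from $f$ a single computable martingale that succeeds on every such $z$. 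Since $f$ has finite total variation, the dyadic intervals carrying a large increment of $f$ at each scale have controlled total length and are effectively locatable, which should let such a martingale bet that $z$ avoids their neighbourhoods; making this precise despite the mismatch between the additive neighbourhood structure of $\reals$ and the multiplicative structure of the binary tree --- exactly the long-run phenomenon above --- is the crux. (Note that the classical Vitali / rising-sun proof that the exceptional set is Lebesgue null effectivises only to a \ML\ test, and since computable randomness is strictly weaker than \ML\ randomness that is not enough, so a genuinely martingale-theoretic argument is needed.)
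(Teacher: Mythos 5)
The paper merely cites this theorem (Brattka--Miller--Nies, Thm.\ 4.1) and contains no proof to compare against, so I assess your sketch on its own terms. Your easy direction is essentially right: pass to a savings martingale so the one-step increase of $M$ is bounded, whence $\nu(I_\sigma)=2^{-|\sigma|}M(\sigma)$ defines an atomless computable measure with computable cdf $f$, and your pigeonhole over the two halves of $I_n(z)$ indeed forces a one-sided upper Dini derivative to be $+\infty$ along the subsequence where $M(z\upharpoonright n)$ is large (dyadic $z$ is disposed of separately by $x\mapsto\max(0,x-z)$).

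The hard direction, however, has a genuine gap that you yourself flag. The reduction via Doob upcrossings to the existence of the dyadic derivative $D=\lim_n 2^n\mu(I_n(z))$ at any computably random $z$ is correct, and your diagnosis of the remaining obstruction --- intervals straddling a nearby dyadic boundary at a level where $z$'s binary expansion has a long run --- is accurate. But the strategy you gesture at for closing it, betting that $z$ avoids neighbourhoods of intervals carrying a large increment of $f$, is still a covering argument and would (as you yourself observe of the Vitali argument) produce only a Martin--L\"of test rather than a single successful computable martingale; you have restated the difficulty, not resolved it. What actually closes the gap in this literature is to run the upcrossing argument simultaneously over the three shifted dyadic grids with offsets $0$, $1/3$, $2/3$ and then invoke the one-third trick (which, incidentally, this paper itself employs in the proof of Theorem~\ref{thm:monotone}): every sufficiently small interval containing $z$ is trapped by shifted dyadic cells of comparable length, so control of the three shifted dyadic slopes at $z$ yields control of all slopes. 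Without that, or an equivalent device, your sketch establishes only dyadic differentiability at computably random points, which is strictly weaker than the theorem.
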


\noindent We will prove the following generalization of the above result.

\begin{theorem}\label{t_monotone_full} Let $n\ge 1$. $z\in\R^n$ is computably random $\iff$  every computable monotone function $f:\R^n\to\R^n$ is differentiable at $z$.
\end{theorem}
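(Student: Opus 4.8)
The plan is to reduce the $n$-dimensional statement to the one-dimensional Theorem~\ref{t_nies1} in both directions, exploiting the fact that "monotone" for a map $f:\R^n\to\R^n$ means $\langle f(x)-f(y), x-y\rangle \ge 0$, which is a coordinate-by-coordinate–flavored condition once one tests it along axis-parallel lines. Let me sketch each direction.

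The plan is to prove both directions by reducing to the one-dimensional Theorem~\ref{t_nies1}, using the structure theory of monotone operators to linearise the geometry. Recall that $f\colon\R^n\to\R^n$ is monotone exactly when $\langle f(x)-f(y),\,x-y\rangle\ge 0$ for all $x,y$, and that a computable $f$ is continuous and everywhere defined, hence a maximal monotone operator. By Minty's theorem $I+f$ is then a bijection of $\R^n$ whose inverse $R:=(I+f)^{-1}$ is firmly nonexpansive, in particular $1$-Lipschitz; moreover $R$ is computable, since the estimate $|(I+f)(x)-(I+f)(y)|\ge|x-y|$ supplies an explicit modulus for inverting $I+f$ by search inside a computable ball. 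The observation that drives everything is the differentiability dictionary between $f$ and $R$ at the paired points $z$ and $w:=z+f(z)$: if $R$ is differentiable at $w$ with $DR(w)$ invertible then $f$ is differentiable at $z$ with $Df(z)=DR(w)^{-1}-I$; conversely, if $f$ is differentiable at $z$ then $I+Df(z)$ is invertible (being positive definite), whence $R$ is differentiable at $w$ by the inverse function theorem; so the non-differentiability points of $f$ all lie in $R\bigl(\{w: R\text{ not differentiable at }w\}\cup\{w: DR(w)\text{ exists but is singular}\}\bigr)$.

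For the implication ``$z$ computably random $\Rightarrow$ every computable monotone $f$ is differentiable at $z$'', it suffices by the dictionary to show $R$ is differentiable at $w$ with invertible derivative. I would prove an effective Rademacher theorem: for a computable Lipschitz map $\R^n\to\R^m$, both the non-differentiability set and the set where the derivative exists but is singular are small enough that no computably random point can be carried into their relevant image. This I would extract from Theorem~\ref{t_nies1} by restricting, for each rational direction $u$, to the one-dimensional section $t\mapsto\langle R(w+tu),u\rangle$, which is computable and nondecreasing (since $R$, like $f$, is monotone), so that Theorem~\ref{t_nies1} forces it to be differentiable off a computable-martingale-null set of base points; recovering the remaining components of the directional derivatives via a Lipschitz/bounded-variation strengthening of Theorem~\ref{t_nies1}; assembling these countably many one-dimensional facts by a Fubini-type argument for computable martingales; and then upgrading pointwise Gateaux differentiability to Fréchet differentiability on the complement using the Lipschitz bound, which is the classical step and is effective. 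Because computable randomness is not invariant under Turing reducibility (and is not preserved by the co-Lipschitz map $I+f$), the verification cannot end by transporting the randomness of $z$ to $w$; instead one must exhibit, from a hypothetical failure of differentiability of $f$ at $z$, a single explicit computable martingale that succeeds on the joint binary expansion of $z$, obtained by transporting the one-dimensional martingales back along the computable modulus of $I+f$. I expect this Fubini-plus-transport step, rather than the analytic content, to be the main obstacle in this direction.

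For the converse, ``every computable monotone $f$ differentiable at $z$ $\Rightarrow$ $z$ computably random'', I argue by contraposition. Given a computable martingale $d$ that succeeds on the joint (interleaved) binary expansion of $z=(z_1,\dots,z_n)$, I construct a computable convex function $\Phi\colon\R^n\to\R$ whose gradient $f:=\nabla\Phi$ --- a computable monotone map --- fails to be differentiable at $z$. On the dyadic box of side $2^{-k}$ containing $z$, $\Phi$ is designed to be a controlled perturbation of a quadratic whose curvature tracks the capital of $d$ after $k$ rounds of the dyadic box filtration, so that the approximate Hessians along the boxes shrinking to $z$ are unbounded and $\nabla\Phi$ cannot be differentiable there; the one-dimensional template for this is precisely the construction underlying Theorem~\ref{t_nies1}. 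The genuinely new ingredient, and the point I expect to be hardest, is the \emph{convex amalgamation}: arranging the piecewise-quadratic data to agree in value and gradient across adjacent boxes so that $\Phi$ is globally $C^{1}$ and convex. Since computable randomness fails van Lambalgen's theorem, no step of either direction may be reduced to conditioning one coordinate of $z$ on the others, and the whole argument is kept in terms of the joint expansion throughout.
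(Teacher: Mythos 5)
Your forward direction correctly identifies the two load-bearing ideas — the Minty resolvent $R=(I+f)^{-1}$ is a computable $1$-Lipschitz map, and the differentiability dictionary between $f$ at $z$ and $R$ at $w=z+f(z)$ (this is the paper's Proposition~\ref{inverse_diff_proposition}). But you then talk yourself into a much harder route than is needed, based on a misconception. You claim that because $I+f$ is merely co-Lipschitz, one ``cannot end by transporting the randomness of $z$ to $w$''. In fact the paper does exactly that transport, and it works: Lemma~\ref{no_random_lemma} shows that for any computable \emph{injective Lipschitz} map $h$, if a point is not computably random then neither is its image under $h$. Applying this contrapositive to $h=R$ (which \emph{is} injective, $1$-Lipschitz and computable) and to the point $w$, one gets that $w$ is computably random whenever $z$ is — no Fubini-for-martingales, no directional sectioning, no martingale transport along a modulus. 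Your alternative plan of decomposing into one-dimensional sections and then ``assembling via Fubini'' also has an independent problem: computable randomness of $z\in\R^n$ does not localize to computable randomness of the scalar base points of axis- or direction-parallel sections through $z$, so Theorem~\ref{t_nies1} is not available off the shelf for those sections. You would be re-deriving an effective Rademacher theorem by a route that has no clear justification, when the paper simply invokes one and combines it with the preservation lemma and a separate effective Sard-type result (Theorem~\ref{critical_point_theorem}) to get invertibility of $DR(w)$.

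For the converse, the gap is more serious. You propose a direct ``convex amalgamation'': build a computable convex $\Phi$ whose Hessian tracks the capital of a martingale along dyadic boxes, gluing piecewise-quadratic pieces to be globally $C^1$ and convex. You flag this as the hardest step, and rightly so — it is not merely hard, it is the step the paper's entire strategy is designed to avoid, because the compatibility constraints on adjacent boxes (matching values, matching gradients, \emph{and} preserving global convexity) overdetermine the data once $n\ge 2$. There is no reason a prescribed box-wise oscillation of Hessians should admit such a glue, and the $n=1$ case is misleading here because there the cdf of any measure is automatically monotone. The paper instead uses Brenier's theorem: from a computable measure $\mu_M$ (built from a bounded, oscillating martingale $M$) whose density derivative fails to exist at $z$, the optimal transport map from $\mu_M$ to Lebesgue measure is automatically $\nabla\phi$ for some convex $\phi$, hence monotone; Theorem~\ref{effective_brenier_1} shows $\phi$ can be chosen computable, and Caffarelli's regularity theory (via boundedness of $M$) ensures $\phi\in C^{1,\alpha}$ so that $\nabla\phi$ is a computable function. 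Brenier's theorem is precisely the device that supplies the global convex potential that your amalgamation attempts to build by hand. Without an argument that the amalgamation is even possible, your converse direction does not close.
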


\n The proof has two distinct parts. The  $\Rightarrow$ implication is proven in Section \ref{forward_direction} using an effective form of the Rademacher Theorem and geometric properties of monotone functions. The converse implication
is proven in Section \ref{back_direction} and uses results from optimal transport.

\section{Differentiability of  computable monotone functions\\ from $\R^n$ to $\R^n$}\label{forward_direction}

\n Let $f:\R^n\to\R^n$  be a function. We say $f$\ is \emph{monotone} if $$\langle f(x)-f(y),x-y\rangle\ge0\text{ for all }x,y\in\R^n.$$

\n As we will see later in this section, monotone functions are very closely related to Lipschitz functions.

In non-effective setting, a.e. differentiability of monotone function from $\R^n$ to $\R^n$ has been proven by Mignot \cite{Mignot:76}, who used Rademacher's Theorem and a fact about monotone functions discovered by Minty \cite{Minty:62}.

In this section we will show that computable randomness implies differentiability of computable monotone real functions of several variables and our proof follows the same path - using the effective form or Rademacher's Theorem proven in the previous section and the following correspondence observed by Minty. 
\subsection{Minty parameterization and overview of the proof}

Minty showed that the so called Cayley transformation  

\[\Phi:\R^n\times\R^n\to \R^n\times\R^n \text{ defined by } \Phi(x,y)=\frac{1}{\sqrt2}(y+x,y-x)\]   transforms the graph of a monotone function into a graph of a  graph of a 1-Lipschitz function. Note that when $n=1$ this is a clockwise rotation of $\pi/4$. We will rely on the following consequence of the above fact. 

\begin{proposition}[cf. Proposition 1.2 in \cite{Alberti.Ambrosio:99}]\label{monotone_lip_lemma}
Let $u:\R^n\to\R^n$ be monotone. Then $(u+I)$ and $(u+I)^{-1}$ are monotone and $(u+I)^{-1}$ is 1-Lipschitz.
\end{proposition}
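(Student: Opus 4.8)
The plan is to exploit the Cayley transform $\Phi$ directly to transfer the $1$-Lipschitz property, and to use Minty's observation together with elementary monotonicity algebra for the rest. Write $v = u + I$. I will prove three things in turn: (i) $v$ is monotone; (ii) $v$ is injective, hence $v^{-1}$ is a well-defined (possibly partial) function; (iii) $v^{-1}$ is monotone and $1$-Lipschitz on its domain.

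\emph{Step (i): monotonicity of $u+I$.} This is immediate: for all $x,y$,
\[
\langle v(x)-v(y),\, x-y\rangle = \langle u(x)-u(y),\, x-y\rangle + \langle x-y,\, x-y\rangle \ge \|x-y\|^2 \ge 0,
\]
using monotonicity of $u$. Note this computation actually gives the \emph{strong} monotonicity estimate $\langle v(x)-v(y), x-y\rangle \ge \|x-y\|^2$, which is the key quantitative fact driving the rest.

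\emph{Step (ii) and (iii): injectivity, monotonicity and the Lipschitz bound for $v^{-1}$.} From the strong monotonicity estimate and Cauchy--Schwarz, $\|x-y\|^2 \le \langle v(x)-v(y),x-y\rangle \le \|v(x)-v(y)\|\,\|x-y\|$, so $\|x-y\| \le \|v(x)-v(y)\|$. In particular $v(x)=v(y)$ forces $x=y$, so $v$ is injective and $v^{-1}$ is well-defined on $\mathrm{ran}(v)$. Now given $p = v(x)$, $q = v(y)$ in the range, rewrite the same two inequalities with $x = v^{-1}(p)$, $y = v^{-1}(q)$: the bound $\|x-y\|\le\|v(x)-v(y)\|$ becomes $\|v^{-1}(p)-v^{-1}(q)\| \le \|p-q\|$, which is exactly $1$-Lipschitzness of $v^{-1}$; and the strong monotonicity estimate $\langle p-q,\, v^{-1}(p)-v^{-1}(q)\rangle = \langle v(x)-v(y), x-y\rangle \ge \|x-y\|^2 \ge 0$ gives monotonicity of $v^{-1}$. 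One should also record that $\mathrm{ran}(v) = \R^n$, i.e.\ that $v^{-1}$ is total; this is the Minty surjectivity theorem and can either be cited from \cite{Minty:62} (or \cite{Alberti.Ambrosio:99}) or, if a self-contained argument is wanted, derived from the Cayley transform picture: $\Phi$ maps the graph of $u$ onto the graph of a $1$-Lipschitz map $g$ defined on a closed set, and surjectivity of $v$ corresponds to $g$ being defined on all of $\R^n$, which follows from maximality of the monotone graph (every monotone $u:\R^n\to\R^n$ defined on all of $\R^n$ is maximal monotone) plus a connectedness/openness argument on $\mathrm{dom}(g)$.

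\emph{Main obstacle.} The purely algebraic parts (i)--(iii) are routine inner-product manipulations; the only substantive point is the totality of $v^{-1}$, i.e.\ surjectivity of $u+I$. I expect that to be where the real work (or the cited input) lies — it is genuinely Minty's theorem and not a one-line estimate, relying on the fact that a monotone operator defined everywhere on $\R^n$ is automatically maximal, so that its Cayley transform is a $1$-Lipschitz function whose domain is simultaneously closed (graph maximality) and open (local surjectivity of $x\mapsto x + $ a $1$-Lipschitz perturbation, or a degree/fixed-point argument), hence all of $\R^n$. Since the statement is quoted as ``cf. Proposition 1.2 in \cite{Alberti.Ambrosio:99}'', the cleanest write-up simply invokes that reference for surjectivity and supplies the short self-contained derivation of the monotonicity and $1$-Lipschitz bounds from the strong monotonicity estimate above.
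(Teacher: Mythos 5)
The paper does not prove this proposition; it simply cites Proposition~1.2 of Alberti--Ambrosio, so there is no internal argument to compare against. Your proof is correct and is the standard one. The algebraic core — the strong monotonicity estimate $\langle v(x)-v(y),\,x-y\rangle \ge \|x-y\|^2$, followed by Cauchy--Schwarz to get $\|x-y\|\le\|v(x)-v(y)\|$, hence injectivity and $1$-Lipschitzness of $v^{-1}$, and then the trivial rewrite giving monotonicity of $v^{-1}$ — is exactly what one finds in Alberti--Ambrosio and in Rockafellar--Wets, and you have it right. You are also correct to single out surjectivity of $u+I$ (equivalently, totality of $v^{-1}$) as the only genuinely nontrivial point; that is Minty's theorem, and it is not derivable from the inner-product bookkeeping above. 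One small caveat worth recording: as literally stated the proposition assumes only that $u$ is monotone, and without continuity $u$ need not be maximal monotone, so $u+I$ need not be onto — $v^{-1}$ is then just a $1$-Lipschitz monotone map on $\mathrm{ran}(u+I)$. In the paper's application $u$ is computable, hence continuous, and a continuous everywhere-defined monotone map on $\R^n$ is maximal monotone, so surjectivity does hold there; if you want the proposition to assert totality you should either add continuity to the hypotheses or read ``$(u+I)^{-1}$'' as a possibly partial function, as you in fact did in steps (ii)--(iii).
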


\begin{proposition}[cf. Theorem 12.65 in \cite{Rockafellar.Wets:97}]\label{inverse_diff_proposition}
Let $u:\R^n\to\R^n$ be a continuous monotone function. Let $z\in\R^n$ and define $f=(u+I)^{-1}$ and $\hat z=u(z)+z.$ The following two are equivalent:
\begin{enumerate}
\item $u$ is differentiable at $z$, and
\item  $f$ is differentiable at $\hat z$ and $f'(z)$ is invertible.
\end{enumerate}
\end{proposition}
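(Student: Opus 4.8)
The plan is to reduce both implications to a single elementary fact about inverses of maps that are differentiable at just one point: \emph{if $g:\R^n\to\R^n$ is a bijection whose inverse $g^{-1}$ is continuous, and $g$ is differentiable at a point $a$ with invertible derivative $A=Dg(a)$, then $g^{-1}$ is differentiable at $g(a)$ and $Dg^{-1}(g(a))=A^{-1}$.} I would prove this straight from the definition: writing $g(a+h)=g(a)+Ah+r(h)$ with $\|r(h)\|=o(\|h\|)$, for $k$ near $0$ put $h=g^{-1}(g(a)+k)-a$, so that $h\to 0$ as $k\to 0$ by continuity of $g^{-1}$, and $k=Ah+r(h)$. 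Invertibility of $A$ gives $\|h\|=\|A^{-1}(k-r(h))\|\le\|A^{-1}\|\,\|k\|+\|A^{-1}\|\,\|r(h)\|$, and absorbing the $r(h)$ term for $\|h\|$ small yields $\|h\|\le 2\|A^{-1}\|\,\|k\|$; hence $r(h)=o(\|k\|)$ and $h=A^{-1}k+o(\|k\|)$, which is the claim. The point to be careful about is that this is genuinely elementary and uses only pointwise differentiability — we are \emph{not} invoking the classical inverse function theorem, which would require $u+I$ to be $C^1$. (One of the two maps it will be applied to, namely $f$, is even $1$-Lipschitz by Proposition~\ref{monotone_lip_lemma}, making the estimate immediate there; the general statement above handles the other application.)

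Next I would record the one geometric input: if $u$ is monotone and differentiable at $z$, then $\langle u'(z)h,h\rangle\ge 0$ for all $h\in\R^n$. Indeed, monotonicity applied to the pair $z,z+th$ for $t>0$ gives $\langle u(z+th)-u(z),h\rangle\ge 0$; dividing by $t$ and letting $t\to 0^+$ gives $\langle u'(z)h,h\rangle\ge 0$. Consequently $A:=u'(z)+I$ satisfies $\langle Ah,h\rangle\ge\|h\|^2$, so $A$ is injective, hence invertible.

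For (1)$\Rightarrow$(2): assuming $u$ is differentiable at $z$, the map $g:=u+I$ is differentiable at $z$ with derivative $A=u'(z)+I$, which is invertible by the previous paragraph, and $g$ is a bijection with continuous inverse $f=(u+I)^{-1}$ (Proposition~\ref{monotone_lip_lemma}). Applying the lemma with $a=z$ shows that $f$ is differentiable at $g(z)=\hat z$ with $Df(\hat z)=A^{-1}$, and $A^{-1}$ is invertible; this is exactly (2). For (2)$\Rightarrow$(1): assuming $f=(u+I)^{-1}$ is differentiable at $\hat z$ with invertible derivative $B$, I would apply the lemma with $g:=f$ and $a:=\hat z$; its inverse $f^{-1}=u+I$ is continuous because $u$ is continuous, so the lemma yields that $u+I$ is differentiable at $f(\hat z)=z$ with derivative $B^{-1}$. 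Hence $u=(u+I)-I$ is differentiable at $z$ (with $u'(z)=B^{-1}-I$), which is (1).

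The main obstacle is essentially contained in the first paragraph: stating the one-point inverse-differentiability lemma cleanly and verifying, in each of the two applications, that all its hypotheses hold — global bijectivity, continuity of the relevant inverse, and invertibility of the derivative at the point in question. Once that is in place the rest is bookkeeping, the only nontrivial extra ingredient being the monotonicity estimate $\langle u'(z)h,h\rangle\ge 0$ used to guarantee invertibility of $u'(z)+I$ in the forward direction.
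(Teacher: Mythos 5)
The paper does not actually supply a proof of this proposition; it is cited to Theorem~12.65 of Rockafellar and Wets, a result in set-valued variational analysis that is typically established via proto-differentiability and graphical convergence of set-valued maps. Your argument is self-contained and genuinely more elementary: both implications are reduced to a single ``one-point inverse function theorem,'' proved directly from the definition by absorbing the $o(\|h\|)$ error into the estimate $\|h\|\le 2\|A^{-1}\|\,\|k\|$, combined with the observation that monotonicity of $u$ forces $\langle u'(z)h,h\rangle\ge 0$ and hence $\langle (u'(z)+I)h,h\rangle\ge\|h\|^2$, so $u'(z)+I$ is invertible. Both steps check out, and the two applications of the lemma are set up correctly (bijectivity and continuity of the relevant inverse follow from Proposition~\ref{monotone_lip_lemma} when $g=u+I$, and from continuity of $u$ when $g=f$). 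One point worth making explicit rather than leaving implicit: you use that $u+I$ is surjective, so that $f=(u+I)^{-1}$ is defined on all of $\R^n$; this is Minty's surjectivity theorem for continuous (hence maximal) monotone operators and is presupposed by the way the paper states Proposition~\ref{monotone_lip_lemma}, but a sentence acknowledging it would make the argument airtight. Also note that the proposition as printed reads ``$f'(z)$ is invertible,'' a typo for $f'(\hat z)$; you read it the intended way.
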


A good exposition of classical results related to this area can be found in \cite{Alberti.Ambrosio:99} and \cite{Rockafellar.Wets:97}.

\n Now we are ready to explain our proof.

\vspace{5pt}
\n\emph{Overview of the proof} 

\n Let $u:\R^n\to\R^n$ be  a monotone computable function and let $z\in\zn$ be computably random. Then $g=u+I$ is monotone and computable and $f=g^{-1}$ is 1-Lipschitz and computable.
If we can show that $g(z)$ is computably random, then $f$ is differentiable at $g(z)$. By Proposition \ref{inverse_diff_proposition}, if the derivative of $f$ at $g(z)$ is invertible, then $g$ is differentiable at $z$.

From the above description, it is clear that we require the following two ingredients to complete the proof:

\begin{enumerate}
\item[(preservation property)] we need to show that $g(z)$ is computably random when $z$ is, and
that\item[(singularity property)] computable randomness of $g(z)$ implies that $f'(g(z))$ is invertible. 
\end{enumerate}

\n In the following two subsections we will prove both of the above.

\subsection{Another preservation property}\label{preservation_lemma_subsection}

To prove the preservation property mentioned in the previous subsection, we require some terminology and notation from \cite{Rute:13}.

Firstly, we need to extend the notion of computable randomness to $\R^n$: we say $z\in \R^n$ is computably random if its binary expansion (or, equivalently, its fractional part) is computably random. When $z\in\zn$, this characterisation is equivalent to the Definition \ref{cr_definition}. Otherwise, when $z\notin \zn$, this characterisation is equivalent to computable randomness on some computable translation of the unit cube equipped with the usual Lebesgue measure. 

\begin{notation}

For every $n\ge 1$, let $\A_n$ be some fixed a.e. decidable cell decomposition of $\zn$. For the sake of simplifying the notation, in the rest of this section, for all $n\ge1$ and all  $\sigma \in \fcantor$, we denote the cell $[\sigma]_{\A_n}$ by $[\sigma]$.

\end{notation}

\begin{definition}

A \emph{Martin-L\"of test} is a uniformly computable sequence $\seq U$ of $\arsigma01$ subsets of $\zn$ such that $\lm{U_i}\le 2^{-i}$ for all $i$. We say $\seq U$ \emph{covers} $z\in\zn$ if $z\in\bigcap_i U_i$.

\n We say a Martin-L\"of test $\seq U$ is \emph{bounded} if there is a computable measure $\nu:\fcantor\to[0,\infty)$ satisfying \[\lm {U_i\cap [\sigma]}\le 2^{-i}\nu(\sigma)\] for all $i\in\N$ and $ \sigma\in\fcantor$.  

\end{definition}

\n We require the following characterisation of computable randomness in the unit cube due to Rute:
\begin{proposition}[cf. Theorem 5.3 in \cite{Rute:13}]\label{cr_characterisation_ml}
Let $z\in\zn$. The following two are equivalent:
\begin{enumerate}
\item $z$ is not computably random, and
\item either $z$ is an unrepresented point, or there is a bounded Martin-L\"of test $\seq U$ that covers $z$.
\end{enumerate}
\end{proposition}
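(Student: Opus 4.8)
The plan is to run the standard dictionary between computable martingales on $\zn$ (for the decomposition $\A_n$) and computable measures on $\fcantor$: to a nonnegative computable martingale $M$ normalised by $M(\estring)=1$ one attaches the computable measure $\nu(\sigma)=M(\sigma)\,\lambda([\sigma])$, and conversely $M(\sigma)=\nu(\sigma)/\lambda([\sigma])$ recovers a computable martingale from a computable measure $\nu$ (both times using that the cell measures $\lambda([\sigma])$ are positive and uniformly computable). Under this dictionary a bounded Martin-L\"of test is essentially the set where the density of $\nu$ relative to $\lambda$ blows up, and the boundedness clause is exactly what makes the associated betting strategy computable rather than merely lower-semicomputable. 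Unrepresented points are not computably random by convention, so both implications hold trivially for them; from now on I assume $z$ is represented.

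For $(2)\Rightarrow(1)$: let $\seq U$ be bounded by a computable measure $\nu$, so $\lm{U_i\cap[\sigma]}\le 2^{-i}\nu(\sigma)$, and suppose $z\in\bigcap_i U_i$. Put $M(\sigma)=\nu(\sigma)/\lambda([\sigma])$; additivity of $\nu$ and of $\sigma\mapsto\lambda([\sigma])$ makes $M$ a martingale for $\A_n$, computable because $\nu$ and the cell measures are. Fixing $i$, openness of $U_i$ gives an $n_i$ with $[z\uh n]\subseteq U_i$ for $n\ge n_i$, hence $\lambda([z\uh n])=\lm{U_i\cap[z\uh n]}\le 2^{-i}\nu(z\uh n)$ and $M(z\uh n)\ge 2^i$. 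So $M(z\uh n)\to\infty$ and $z$ is not computably random.

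For $(1)\Rightarrow(2)$: if $z$ is not computably random, a standard application of the savings trick yields a nonnegative computable martingale $M$ for $\A_n$ with $M(\estring)=1$, $\lim_n M(z\uh n)=\infty$, and a constant $c$ with $M(\sigma\tau)\ge M(\sigma)-c$ for all $\sigma,\tau$. Let $\nu(\sigma)=M(\sigma)\lambda([\sigma])$ be the induced computable measure, and set $U_i=\bigcup\{[\sigma]:M(\sigma)>2^i+c\}$, uniformly $\arsigma01$ since $M$ is computable, and covering $z$ since $M(z\uh n)\to\infty$. Writing $W_i$ for the $\preceq$-minimal $\sigma$ with $M(\sigma)>2^i+c$, we have $U_i=\bigsqcup_{\sigma\in W_i}[\sigma]$, and the savings inequality forces $M(\tau)>2^i$ for every $\tau$ extending a member of $W_i$. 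Now for any $\rho$: if $\rho$ extends some $\sigma\in W_i$ then $[\rho]\subseteq U_i$ and $M(\rho)>2^i$, so $\lm{U_i\cap[\rho]}=\lambda([\rho])\le 2^{-i}\nu(\rho)$; otherwise $U_i\cap[\rho]$ is the disjoint union of the cells $[\tau]$ with $\tau\in W_i$, $\tau\supseteq\rho$, each having $M(\tau)>2^i$, so $\lm{U_i\cap[\rho]}=\sum_\tau\lambda([\tau])\le 2^{-i}\sum_\tau\nu(\tau)\le 2^{-i}\nu(\rho)$. Taking $\rho=\estring$ shows $\lm{U_i}\le 2^{-i}$, so $\seq U$ is a Martin-L\"of test, and the inequalities just proved show it is bounded by $\nu$; it covers $z$, completing $(1)\Rightarrow(2)$.

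The main obstacle is precisely the boundedness in $(1)\Rightarrow(2)$: the test read off naively from an arbitrary computable martingale need not be bounded by the induced measure, since a martingale may climb past $2^i$ and later fall back, leaving $\nu(\sigma)=M(\sigma)\lambda([\sigma])$ too small on a cell $[\sigma]$ already absorbed into $U_i$. The savings property is what removes this difficulty --- it keeps $M$, hence the $\nu$-density, uniformly large on the whole subtree below a witness --- so invoking it in the right form is the crux. A minor secondary point is that the ``a.e. decidable'' nature of $\A_n$ requires setting aside the null set of boundary/unrepresented points at the start, which is why clause (2) carries the extra disjunct about $z$ being an unrepresented point.
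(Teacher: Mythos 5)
The paper does not actually prove this proposition: it is cited as Theorem~5.3 of Rute's paper \cite{Rute:13} and used as a black box. So there is no in-paper proof to compare against. Evaluated on its own merits, your argument is correct, and it follows what is almost certainly the same route as the source: the bijective correspondence $M(\sigma)\leftrightarrow\nu(\sigma)/\lambda([\sigma])$ between computable martingales for the cell decomposition $\A_n$ and computable premeasures on $\fcantor$ (additivity of $\nu$ being precisely the fairness condition for $M$), together with the savings trick to obtain the boundedness clause. Both directions are argued cleanly: in $(2)\Rightarrow(1)$ you correctly use that $U_i$ is open and the cells $[z\uh n]$ shrink to the represented point $z$ to get $M(z\uh n)\ge 2^i$ eventually; in $(1)\Rightarrow(2)$ you correctly observe that $U_i=\bigsqcup_{\sigma\in W_i}[\sigma]$ and that the savings inequality propagates $M>2^i$ down the whole subtree below each minimal witness, which is exactly what makes $\lm{U_i\cap[\rho]}\le 2^{-i}\nu(\rho)$ hold uniformly. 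Your closing remark identifying the savings property as the crux of the boundedness is the right diagnostic.

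Two small points worth being explicit about. First, computability of $M=\nu/\lambda([\cdot])$ (and conversely of $\nu=M\cdot\lambda([\cdot])$) requires that the cell measures $\lambda([\sigma])$ are positive and uniformly computable; this is part of the a.e.~decidable cell decomposition setup and should be flagged rather than left implicit. Second, you invoke the savings trick for martingales on a general cell decomposition of $\zn$ rather than on $\{0,1\}^\omega$ with the uniform measure; the trick carries over because it operates purely on the tree of cells and the martingale's running capital, but a careful write-up should say so, since this is the one place where a reader might worry the Cantor-space lemma is being used out of scope.
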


\begin{remark}
For our considerations it is sufficient to know that if $z$ is an unrepresented point, then it is not weakly random. 
\end{remark}

\n We are now in position to state and to prove the required preservation property for computable randomness.

\begin{lemma}\label{no_random_lemma}
Let $f:\R^n\to\R^n$ be a computable injective Lipschitz function and suppose $z\in \R^n$ is not computably random. Then $f(z)$ is not computably random either.

\begin{proof}

Without loss of generality we assume $z\in\zn$, $f(z)\in\zn$ and  $\Lip(f)\le1$ (otherwise we may consider $\hat f(x)=A\cdot f(x)+B$ for some suitable computable $A$ and $B$).

Firstly, let's assume that $z$ is an unrepresented point. Let $P\subset\zn$ be a $\arpi01$ null set with $z \in P$. Then $f(z)\in f(P)\cap\zn$ and since $f(P)\cap\zn$ is also a $\arpi01$ null set, $f(z)$ is not weakly random.

Let $\seq V$ be a bounded Martin-L\"of test with $z\in \bigcap_i V_i$ and let $\nu$ be a computable measure such that $\lm{V_i\cap \cylinder \sigma}\le 2^{-i}\nu (\sigma)$ for all $i,\sigma$. 

Define $U_i=f(V_i)\cap \zn$ for all $i$. Since $\lm{U_i}\le \lm{V_i}$ (see Lemma 3.10.12 in \cite{Bogachev.vol1:07}) and $f$ is injective, $\seq U$ is a Martin-L\"of test. 

Define $\nu_f=\nu\circ f^{-1}$. It is a computable measure and for all $i,\sigma$ we have 
\begin{align*}
\lm{U_i\cap \cylinder \sigma}=\lm{f(V_i)\cap \cylinder \sigma}=\\\lm{f(V_i\cap f^{-1}(\cylinder \sigma))} \le  2^{-i}\nu \left(f^{-1}(\cylinder \sigma)\right)=2^{-i}\nu_f(\sigma).
\end{align*}

It follows that $\seq U$ is a bounded Martin-L\"of test that covers $f(z)$ and thus $f(z)$ is not computably random.

\end{proof}
\end{lemma}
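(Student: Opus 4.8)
The plan is to reduce everything to the unit cube and then to run Rute's dichotomy (Proposition~\ref{cr_characterisation_ml}) forward through $f$: a point of $\zn$ fails to be computably random precisely when it is unrepresented or is covered by a bounded Martin-L\"of test, and in each of these two cases I will push the witnessing object forward along $f$. For the normalisation, recall that computable randomness of a point of $\R^n$ depends only on its fractional part, and that translating by a computable vector and scaling by a computable positive scalar — together with their inverses — are computable and preserve computable randomness. Composing $f$ with such maps on domain and codomain and scaling down, I may therefore assume $z\in\zn$, $f(z)\in\zn$ and $\Lip(f)\le 1$, with $f$ still computable, injective and Lipschitz; in particular $f$ does not increase outer Lebesgue measure (Lemma~3.10.12 in~\cite{Bogachev.vol1:07}).

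Suppose first that $z$ is unrepresented. Then, by the remark following Proposition~\ref{cr_characterisation_ml}, $z$ lies in a $\arpi01$ null set $P\subseteq\zn$. Being effectively closed in the computably compact cube, $P$ is computably compact, hence so is $f(P)$; consequently $f(P)\cap\zn$ is a $\arpi01$ set, since its complement in $\zn$ is $\{y:\mathrm{dist}(y,f(P))>0\}$, which is $\arsigma01$, and it is Lebesgue null because $f$ is $1$-Lipschitz. As $f(z)\in f(P)\cap\zn$, the point $f(z)$ lies in a $\arpi01$ null set, so it is not weakly random and a fortiori not computably random.

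Now suppose instead that there is a bounded Martin-L\"of test $\seq V$ covering $z$, witnessed by a computable measure $\nu$ on $\fcantor$ with $\lambda(V_i\cap[\sigma])\le 2^{-i}\nu(\sigma)$ for all $i$ and $\sigma$. Put $U_i=f(V_i)\cap\zn$. Since $f$ is injective and continuous, invariance of domain makes it an open map, so each $f(V_i)$ is open; granting (see below) that $\seq U$ is uniformly $\arsigma01$, it is a Martin-L\"of test because $\lambda(U_i)\le\lambda(V_i)\le 2^{-i}$. To see that it is bounded, let $\nu_f$ be the measure on $\fcantor$ obtained by pushing $\nu$ forward through $f$ (informally $\nu_f=\nu\circ f^{-1}$, using the Borel extension of $\nu$); it is again a computable measure. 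Writing $U_i\cap[\sigma]=f(V_i\cap f^{-1}([\sigma]))$ by injectivity, and using the $1$-Lipschitz bound together with the fact that $\lambda(V_i\cap\cdot)\le 2^{-i}\nu(\cdot)$ propagates from cells to arbitrary Borel sets, I obtain $\lambda(U_i\cap[\sigma])\le 2^{-i}\nu_f(\sigma)$. Hence $\seq U$ is a bounded Martin-L\"of test covering $f(z)$, so $f(z)$ is not computably random.

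The step I expect to be the main obstacle is the effectivity bookkeeping in this last case: verifying that $f(V_i)$ is \emph{effectively} open, uniformly in $i$, and that the push-forward $\nu_f$ is a genuinely \emph{computable} (and consistent) measure on $\fcantor$. Both hinge on $f$ being not merely a topological but a \emph{computable} homeomorphism onto its open image — invariance of domain gives the topological statement and computability of $f$ is used to upgrade it to the effective level — whereas the measure estimates themselves are routine.
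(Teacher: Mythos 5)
Your proposal follows essentially the same route as the paper: normalise to the unit cube with $\Lip(f)\le1$, then apply Rute's dichotomy and push each witness forward — a $\arpi01$ null set in the unrepresented case, a bounded Martin-L\"of test together with its bounding measure in the other case. The construction of $U_i=f(V_i)\cap\zn$ and $\nu_f=\nu\circ f^{-1}$ and the chain of measure inequalities are exactly the paper's. What you add is explicit justification of a couple of steps the paper states tersely (that $f(P)\cap\zn$ is $\arpi01$ via computable compactness; that the bounded-test inequality must be propagated from cells to arbitrary Borel sets before it can be applied to $V_i\cap f^{-1}([\sigma])$), and you correctly flag the effectivity issues — that $f(V_i)$ must be shown \emph{effectively} open uniformly in $i$, and that $\nu_f$ must be verified to be a computable premeasure on $\fcantor$ — which the paper likewise passes over without comment. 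These are genuine places where the published proof is compressed, and your instinct to single them out is sound; resolving them via $f$ being a computable homeomorphism onto its image is the right move. In short: same argument, slightly more self-aware about where the work is.
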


\subsection{Singularity property}\label{subsub33}

The main result in this subsection, Theorem \ref{critical_point_theorem}, can be seen as an effective version of Sard's Theorem for Lipschitz function. Its classical version, proven by Mignot (\cite{Mignot:76}, also see Theorem 9.65 in \cite{Rockafellar.Wets:97}), states that for a Lipschitz function $f:\R^n\to\R^n$, the set of its critical values is a null-set. 

\begin{lemma}\label{critical_value_lemma}
Let $f:\R^n\to\R^n$ be a Lipschitz function. Suppose $z\in\R^n$ is such that $f'(z)$ is singular. Then for every $\epsilon>0$, there exists an open neighbourhood of $z$, $O_\epsilon$ such that $\lm{f(O_\epsilon)}\le \epsilon\lm{O_\epsilon}$.
\begin{proof}

Fix $\epsilon>0$ and let $k=\Lip(f)$.

Define $\epsilon'=\frac{\epsilon}{k^{n-1}2^n(\sqrt n)^n}$. Since $f$\ is differentiable at $z$, there exists $\delta>0$ such that 
\begin{align}\label{eq331.1}
\left|f(x)-f(z)-f'(z)(x-z)\right|\le \epsilon'|x-z|
\end{align}
for all $x\in \R^n$ with $|x-z|\le \delta$. There is an open $n$-cube $C$ with side length equal to $s=\frac\delta{\sqrt n}$ such that $z\in C$ and \ref{eq331.1} holds for all $x\in C$. 

Let $L$ be the mapping defined by $L(x)=f(z)+f'(z)(x-z)$. Since $f'(z)$ is singular, $L$\ is not onto and its range is contained in some hyperplane $H$.

As a consequence of \ref{eq331.1} we have $\left|f(x)-L(x)\right|\le \epsilon'\delta$ for all $x\in C$. Thus, $f(C)\subseteq L(C)+[-\epsilon'\delta,\epsilon'\delta]^n$. Since $L$ is a $k$-Lipschitz mapping, the image of $C$\ under $L$ lies in the intersection of $H$ with a closed ball with radius $k\delta$ centered at $f(z)$. Then $L(C)$ is contained in a rotated $(n-1)$-dimensional cube of side $2k\delta$. This shows that $f(C)$ lies in a rotated  box  $\hat C$ with 

\[
\lm{\hat C}=(2k\delta)^{n-1}2\epsilon'\delta=2(2k)^{n-1}\epsilon'(\sqrt n)^n\left(\frac{\delta}{\sqrt n}\right)^n=\epsilon\lm{C}.
\]

\end{proof}
\end{lemma}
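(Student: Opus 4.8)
\emph{Proof proposal.} The plan is to take $O_\epsilon$ to be a sufficiently small open cube centred at $z$ and to estimate $\lm{f(O_\epsilon)}$ by replacing $f$ near $z$ with its affine approximation $L(x)=f(z)+f'(z)(x-z)$. Since $f'(z)$ exists, differentiability of $f$ at $z$ yields, for any prescribed $\epsilon'>0$, a radius $\delta>0$ such that $|f(x)-L(x)|\le\epsilon'|x-z|$ whenever $|x-z|\le\delta$. Picking an open $n$-cube $C\ni z$ of side $\delta/\sqrt n$ makes this hold for every $x\in C$, and in particular gives $|f(x)-L(x)|\le\epsilon'\delta$ on $C$, so that $f(C)\subseteq L(C)+[-\epsilon'\delta,\epsilon'\delta]^n$.

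The geometric heart of the argument is the next step. Because $f'(z)$ is singular, its range is contained in a linear hyperplane, so $L(C)$ lies in an affine hyperplane $H$ through $f(z)$; and because $L$ is $k$-Lipschitz with $k$ the operator norm of $f'(z)$ (hence $k\le\Lip(f)$), the set $L(C)$ has diameter $O(k\delta)$ and therefore fits inside a rotated $(n-1)$-dimensional cube of side comparable to $k\delta$ sitting inside $H$. Thickening this flat set by $\epsilon'\delta$ in each coordinate direction produces a rotated box $\hat C\supseteq f(C)$ exactly one of whose sides has length of order $\epsilon'\delta$ while the other $n-1$ have length of order $k\delta$; thus $\lm{\hat C}$ is of order $(k\delta)^{n-1}\epsilon'\delta$, which is linear in the ``thin'' parameter $\epsilon'$.

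It remains to compare $\lm{\hat C}$ with $\lm{C}=(\delta/\sqrt n)^n$ and to choose $\epsilon'$ small as a function of $\epsilon$, $k$, $n$ only --- for instance $\epsilon'=\epsilon/(k^{n-1}2^n(\sqrt n)^n)$ --- so that $\lm{\hat C}\le\epsilon\lm{C}$; note that $\delta$ cancels entirely, which is why a single choice of $\epsilon'$ works no matter how small the neighbourhood is forced to be. Then $O_\epsilon:=C$ satisfies $\lm{f(O_\epsilon)}\le\lm{\hat C}\le\epsilon\lm{O_\epsilon}$. The only genuine obstacle is the bookkeeping in the middle step: one must bound the diameter of $L(C)$ correctly, package the flattened image together with its $\epsilon'\delta$-thickening as a box with a single short side and $n-1$ long sides, and propagate the dimensional constants so that the final estimate is exactly $\le\epsilon\lm{O_\epsilon}$ and not merely $O(\epsilon)\lm{O_\epsilon}$. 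If only the weaker bound were needed one could instead invoke $\lm{L(C)}=|\det f'(z)|\lm{C}=0$ together with a covering argument, but the explicit box computation is cleaner and self-contained.
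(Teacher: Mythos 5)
Your proposal is correct and follows essentially the same route as the paper: linearise $f$ at $z$, use singularity of $f'(z)$ to flatten $L(C)$ into a hyperplane, enclose $f(C)$ in a thin rotated box with one side of order $\epsilon'\delta$ and $n-1$ sides of order $k\delta$, and choose $\epsilon'=\epsilon/(k^{n-1}2^n(\sqrt n)^n)$ so that $\delta$ cancels in the volume ratio. The only cosmetic difference is that you take $k$ to be the operator norm of $f'(z)$ while the paper takes $k=\Lip(f)$; either works since the former is bounded by the latter.
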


\begin{theorem}\label{critical_point_theorem}
Let $f:\R^n\to\R^n$ be a computable Lipschitz function and let $z\in\R^n$. If $f(z)$ is computably random, then $f'(z)$ is not singular.
\begin{proof}
Without loss of generality we may assume $f(z)\in\zn$ and $\zn\subseteq f(\zn)$. The proof is by contraposition. Suppose $f'(z)=0$.

Let $\nu=\lambda\circ f^{-1}$ and for every $i\in \N$, define $V_i\subset \zn$ as the union of all $[\sigma]$ such that $\lm\sigma\le 2^{-i}\nu(\sigma)$. Note that $\lm{V_i}\le 2^{-i}$ and for every $\tau,$ 
\[
\lm{V_i\cap [\tau]}=\sum_{[\eta]\subseteq [\tau]\cap V_i }\lm{\eta}\le 2^{-i}\sum_{[\eta]\subseteq [\tau]\cap V_i }\nu(\eta)\le2^{-i}\nu(\tau).
\]

Thus $\seq V$ is a bounded Martin-L\"of test and, by Lemma \ref{critical_value_lemma}, it covers $f(z)$.
\end{proof}
\end{theorem}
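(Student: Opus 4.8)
The plan is to prove Theorem \ref{critical_point_theorem} by contraposition, assuming $f'(z)$ is singular and constructing a bounded Martin-L\"of test covering $f(z)$, thereby contradicting computable randomness of $f(z)$ via Proposition \ref{cr_characterisation_ml}. As a normalization, I would first reduce to the case $f(z)\in\zn$ and $\zn\subseteq f(\zn)$ (and $\Lip(f)\le 1$ if convenient) by replacing $f$ with a suitable computable affine rescaling $x\mapsto A f(x)+B$; this does not affect singularity of the derivative or computable randomness. Note that the excerpt's proof writes ``Suppose $f'(z)=0$'', but the statement and Lemma \ref{critical_value_lemma} only need $f'(z)$ singular, so I would keep the weaker hypothesis throughout.

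The core construction is the one sketched at the end of the excerpt: set $\nu=\lambda\circ f^{-1}$, which is a computable measure on the cells because $f$ is a computable Lipschitz function (so $f(\zn)$ is effectively measurable and $\nu$ is computable on the a.e.\ decidable cell decomposition $\A_n$). For each $i\in\N$, define $V_i$ to be the union of all cells $[\sigma]$ with $\lambda[\sigma]\le 2^{-i}\nu(\sigma)$. Then $V_i$ is $\arsigma01$ uniformly in $i$. The measure bound $\lm{V_i}\le 2^{-i}$ follows since $\lm{V_i}=\sum\lambda[\sigma]\le 2^{-i}\sum\nu(\sigma)\le 2^{-i}\nu(\zn)$ over the relevant disjoint cells, and $\nu(\zn)=\lambda(f^{-1}(\zn))\le\lambda(\zn)$ (using $\Lip(f)\le 1$, or just an explicit constant otherwise). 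The displayed computation in the excerpt shows that for every cell $[\tau]$, $\lm{V_i\cap[\tau]}\le 2^{-i}\nu(\tau)$, since every subcell $[\eta]\subseteq[\tau]\cap V_i$ already satisfies $\lambda[\eta]\le 2^{-i}\nu(\eta)$ and these add up. Hence $\seq V$ is a bounded Martin-L\"of test with the bounding measure $\nu$.

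It remains to show $\seq V$ covers $f(z)$, i.e.\ $f(z)\in V_i$ for every $i$. This is where Lemma \ref{critical_value_lemma} enters, and it is the main obstacle. Fix $i$; apply the lemma with $\epsilon=2^{-i}$ (up to a harmless constant) to get an open neighbourhood $O$ of $z$ with $\lm{f(O)}\le 2^{-i}\lm{O}$. The cell $[\sigma]$ of $\A_n$ containing $f(z)$ at a suitably fine level can be arranged to lie inside $f(O)$ for some small enough cell around $z$ mapped into it — more carefully, one wants a cell $[\sigma]\ni f(z)$ with $\lambda[\sigma]\le 2^{-i}\nu(\sigma)=2^{-i}\lambda(f^{-1}[\sigma])$; taking $[\sigma]$ small enough that $f^{-1}[\sigma]$ contains a neighbourhood $O$ of $z$ with $\lm{f(O)}\le 2^{-i}\lm O$ gives $\lambda[\sigma]\le\lm{f(O)}\le 2^{-i}\lm{O}\le 2^{-i}\lambda(f^{-1}[\sigma])=2^{-i}\nu(\sigma)$, so $[\sigma]\subseteq V_i$ and thus $f(z)\in V_i$. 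The delicate point is matching the geometry of the a.e.\ decidable cell decomposition (cells need not be cubes) with the open neighbourhood supplied by Lemma \ref{critical_value_lemma}, and handling the case where $f(z)$ is an unrepresented point or lies on a cell boundary — but the latter is a measure-zero nuisance that can be absorbed, and the unrepresented case is covered by the remark following Proposition \ref{cr_characterisation_ml}. Once $f(z)\in\bigcap_i V_i$ is established, Proposition \ref{cr_characterisation_ml} yields that $f(z)$ is not computably random, completing the contrapositive.
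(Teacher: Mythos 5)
Your proposal follows the paper's argument exactly: same contraposition, same measure $\nu=\lambda\circ f^{-1}$, same definition of $V_i$ as the union of cells $[\sigma]$ with $\lambda[\sigma]\le 2^{-i}\nu(\sigma)$, same invocation of Lemma \ref{critical_value_lemma} and Proposition \ref{cr_characterisation_ml}. Your observation that the paper's line ``Suppose $f'(z)=0$'' overstates the assumption --- only singularity of $f'(z)$ is needed and is all that Lemma \ref{critical_value_lemma} requires --- is correct and flags a genuine (if minor) slip in the paper.

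However, the chain of inequalities you write to justify $f(z)\in V_i$ does not hold as stated. You pick $[\sigma]$ so that $f^{-1}[\sigma]$ contains an open $O\ni z$ with $\lm{f(O)}\le 2^{-i}\lm O$, then assert $\lambda[\sigma]\le\lm{f(O)}\le 2^{-i}\lm O\le 2^{-i}\lambda(f^{-1}[\sigma])$. But the last step uses $O\subseteq f^{-1}[\sigma]$, which gives $f(O)\subseteq[\sigma]$ and hence $\lm{f(O)}\le\lambda[\sigma]$ --- the opposite of your first inequality. They are compatible only if $f(O)$ essentially exhausts $[\sigma]$, and Lemma \ref{critical_value_lemma} does not supply that: it produces some neighbourhood $O_\epsilon$ of $z$ depending only on $\epsilon$, not one whose image fills a prescribed cell. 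So the step showing $f(z)\in\bigcap_i V_i$ (which the paper leaves entirely implicit, so you are in good company) is not actually established in your write-up. What one really needs here is the opposite-direction estimate: because $f'(z)$ is singular, there is a direction $v$ with $f'(z)v=0$, and a thin tube through $z$ in direction $v$ of length $t$ and cross-section radius $\sim r/\Lip(f)$ maps into a ball $B_{O(r)}(f(z))$, giving $\lambda(f^{-1}(B_r(f(z))))\gtrsim t\cdot r^{n-1}$ with $t/r\to\infty$ as $r\to 0$; one then matches balls to cells of $\A_n$ (e.g.\ via the one-third trick used later in the paper or by specialising to dyadic cubes). That is a different argument from the one you sketched, and Lemma \ref{critical_value_lemma} about images of neighbourhoods of $z$ does not by itself close the gap.
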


\subsection{Main result}

\n We are now ready to formulate and prove the main result of this section.

\begin{theorem}\label{theorem_monotone}
Let $f:\real^n\to\real^n $ be an computable monotone function and let $z\in\zn$ be computably random. Then $f$ is differentiable at $z$.

\begin{proof}
\n Define $g=(f+I)^{-1}$, then $g$ is a computable Lipschitz function with $\Lip(g)\le1$.

\n Let $y=f(z)+z$ so that $g(y)=z$. By Lemma \ref{no_random_lemma}, $y$ is computably random and hence $g$ is differentiable at $y$ and by Theorem \ref{critical_point_theorem} $g'(y)$ is invertible. Hence, by Proposition \ref{inverse_diff_proposition}, $f$ is differentiable at $z$. 

\end{proof}
\end{theorem}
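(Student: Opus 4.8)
The plan is to assemble the pieces already developed in this section into a short synthesis argument. Given a computable monotone $f:\real^n\to\real^n$ and a computably random $z\in\zn$, the natural move is to apply the Minty/Cayley correspondence: set $g=(f+I)^{-1}$, which is computable and $1$-Lipschitz by Proposition \ref{monotone_lip_lemma} (here one uses that $f+I$ is monotone and bijective, so $g$ is genuinely a function on all of $\R^n$; this relies on surjectivity of $u+I$ for continuous monotone $u$, which follows from the $1$-Lipschitz inverse being defined everywhere). Write $y=f(z)+z=(f+I)(z)$, so that $g(y)=z$.

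Next I would argue that $y$ is computably random. This is where Lemma \ref{no_random_lemma} enters, but applied in the contrapositive \emph{direction opposite to how one might first guess}: we want to go from randomness of $z$ to randomness of $y$, so we apply the preservation lemma to the map $f+I$ (computable, injective since monotone-plus-identity is strictly monotone, and Lipschitz on the relevant cube after an affine normalization). If $y$ were not computably random, then $z=g(y)=(f+I)^{-1}(y)$ — wait, more cleanly: $y=(f+I)(z)$, so if $z$ is random and $f+I$ is injective computable Lipschitz, the lemma gives randomness of $y$ directly. Then, since $g$ is computable and $1$-Lipschitz and $y$ is computably random, Theorem \ref{t_nies1}'s multivariate analogue — i.e. the forward direction already invoked throughout, that computable randomness implies differentiability of computable Lipschitz functions via the effective Rademacher theorem — yields that $g$ is differentiable at $y$.

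The final step is to upgrade mere differentiability of $g$ at $y$ to differentiability of $f$ at $z$, and this is the one genuinely delicate point: we need $g'(y)$ to be invertible, not just to exist. This is exactly the content of Theorem \ref{critical_point_theorem} (the effective Sard theorem): since $g(y)=z$ is computably random and $g$ is computable Lipschitz, $g'(y)$ cannot be singular. With $g'(y)$ invertible we are in case (2) of Proposition \ref{inverse_diff_proposition} applied to $u=f$ (noting $f=(g)^{-1}\circ$-nothing, i.e. $g=(u+I)^{-1}$ and $\hat z = u(z)+z = y$), which is equivalent to case (1): $f$ is differentiable at $z$. I expect the main obstacle to be purely bookkeeping — making sure the affine normalizations used to reduce to the unit cube are consistent across the three invocations (Lemma \ref{no_random_lemma}, Theorem \ref{critical_point_theorem}, Proposition \ref{inverse_diff_proposition}) and that $f+I$ really is injective and proper so that $g$ is everywhere-defined — rather than anything requiring new ideas.

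\begin{proof}
Define $g=(f+I)^{-1}$. By Proposition \ref{monotone_lip_lemma}, $f+I$ is monotone with a $1$-Lipschitz inverse, so $g$ is a well-defined $1$-Lipschitz function on $\R^n$, and it is computable since $f$ is. Put $y=f(z)+z$, so $g(y)=z$. The map $f+I$ is computable, Lipschitz on any bounded region, and injective (it is strictly monotone), so Lemma \ref{no_random_lemma} applied to $f+I$ and the computably random point $z$ shows that $y=(f+I)(z)$ is computably random. Since $g$ is computable and $1$-Lipschitz and $y$ is computably random, the effective Rademacher theorem gives that $g$ is differentiable at $y$. Moreover $g(y)=z$ is computably random, so Theorem \ref{critical_point_theorem} applied to $g$ and $y$ shows $g'(y)$ is not singular, i.e. $g'(y)$ is invertible. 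Finally, apply Proposition \ref{inverse_diff_proposition} with $u=f$, $z=z$, $f_{\mathrm{prop}}=(f+I)^{-1}=g$ and $\hat z = f(z)+z = y$: condition (2) holds ($g$ is differentiable at $y$ with $g'(y)$ invertible), hence condition (1) holds, namely $f$ is differentiable at $z$.
\end{proof}
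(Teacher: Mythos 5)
Your overall structure matches the paper's exactly: pass to $g=(f+I)^{-1}$, show $y=f(z)+z$ is computably random, conclude $g$ is differentiable at $y$ with $g'(y)$ invertible via the effective Sard theorem, then transfer back to $f$ via Proposition~\ref{inverse_diff_proposition}. But there is a genuine gap in how you invoke Lemma~\ref{no_random_lemma}. You apply it to the map $f+I$ together with the computably random point $z$ and claim it ``shows that $y=(f+I)(z)$ is computably random.'' The lemma says nothing of the sort: it is a preservation of \emph{non}-randomness statement --- for $h$ computable injective Lipschitz, if $w$ is not computably random then $h(w)$ is not computably random. Feeding it a random input gives no conclusion at all. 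To get the direction you actually need ($z$ random $\Rightarrow$ $y$ random), you must apply the lemma to $g$, not to $f+I$: $g$ is computable, $1$-Lipschitz and injective, and if $y$ were not computably random the lemma would make $g(y)=z$ not computably random, a contradiction. Amusingly, this is exactly the argument you started to write (``If $y$ were not computably random, then $z=g(y)\dots$'') before talking yourself out of it with ``wait, more cleanly.''

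There is a secondary problem with the $f+I$ route even if the direction were right: $f+I$ need not be Lipschitz, locally or otherwise. A computable monotone $f$ is continuous, but continuity does not give a Lipschitz bound --- think of $x\mapsto x^{1/3}$ on $\R$, for which $f+I$ has unbounded difference quotients near $0$. So the hypotheses of Lemma~\ref{no_random_lemma} would fail for $f+I$ regardless. The function $g$, by contrast, is guaranteed $1$-Lipschitz by Proposition~\ref{monotone_lip_lemma}, which is precisely why the Minty/Cayley change of variables is used here: it converts an unbounded-modulus monotone map into a $1$-Lipschitz one so that both the effective Rademacher theorem and Lemma~\ref{no_random_lemma} can be applied to $g$. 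The rest of your argument --- differentiability of $g$ at $y$, invertibility of $g'(y)$ from Theorem~\ref{critical_point_theorem}, and the equivalence in Proposition~\ref{inverse_diff_proposition} --- is correct and agrees with the paper.
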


\section{Monotone transfer maps}\label{back_direction}

Suppose $z\in\R^n$ is not computably random and we want to exhibit a computable monotone function $f:\R^n\to\R^n$ that is not differentiable at $z$. Let us first overview how this problem has been resolved in the case when $n=1$. 

\begin{example}[On the real line]\label{ex:real}
Suppose $Z$ is the binary expansion of $z$.
We start with a martingale $M$ (we may assume it has the saving property) that succeeds on the $Z$ and define a computable measure on the real line by $\mu_M(\cylinder\sigma)=M(\sigma)\cdot 2^{-|\sigma|}$. Then the cumulative distribution of $\mu_M$,  $f=\cdf \mu_M$, is not differentiable at $z$. 
\end{example}

Before proceeding to generalize this construction in $\R^n$, we need to review some basic notions from the area known as \emph{optimal transport}.

\subsection{Optimal transportation}

Let $\mu,\nu$ be probability measures on $\R^n$. A probability measure $\pi$ on $\R^n\times \R^n$ is said to have \emph{marginals} $\mu$ and $\nu$ when the following holds for all measurable $A,B\subseteq \R^n$: 
\[
\pi\left[A\times \R^n\right]=\mu[A], \text{ and } \pi\left[\R^n\times B\right]=\nu[B].
\]

Let $\Pi(\mu,\nu)$ denote the set of all probability measures on $\R^n\times \R^n$ whose marginals are $\mu$ and $\nu$. Note that this set is always nonempty. For a given \emph{cost function} $c:\R^n\times\R^n\to \R$ and $\pi\in\Pi(\mu,\nu)$, define the \emph{total transportation cost} $I_c[\pi]$ as 
\[
I_c[\pi]=\int_{\R^n\times \R^n} c(x,y)~d\pi(x,y).
\]

The \emph{optimal transportation cost} between $\mu$ and $\nu$ is the value
\[
\I_c(\mu,\nu)=\inf_{\pi\in\Pi(\mu,\nu)} I_c[\pi].
\]

Let $T:\R^n\to\R^n$ be a map. We say $T$ is a \emph{transport map}, or that $T$ \emph{transports} $\mu$ onto $\nu$ (in symbols, $\nu=T\#\mu$), if for all measurable $A$, $\lambda(A)=\mu(T^{-1}(A))$.

Elements of $\Pi(\mu,\nu)$ are called \emph{transference plans}. We are interested in transference plans induced by measurable maps, that is, plans of the form $\pi_T=(I\times T)\#\mu\in\Pi(\mu,\nu)$ where $T:\R^n\to\R^n$ is a measurable map. The total transportation cost associated with a transport map $T$ is 
\[
I_c[T]=I_c[\pi_T]=\int_{\R^n}c(x,T(x))d\mu(x).
\]
A transport map $T$ for which the cost is optimal, that is for which $I_c[\pi_T]=\I_c(\mu,\nu)$, is called an \emph{optimal transport map}. The problem of minimizing $I_c[T]$ over the set of all transfer maps is known as \emph{Monge's optimal transportation problem}.

Let $U\subseteq \R^n$ be compact. For a function $f:U\to \R$, define its \emph{convex conjugate} $f^*$ by 
\[
f^*(y)=\sup_{x\in U}[xy-f(x)].
\] 

Since $U$ is assumed to be compact, $f^*$ is computable when $f$ is.

The following important result lies at the heart of our construction.

\begin{theorem}[Brenier's theorem, cf. Theorem 2.12 in \cite{Villani:03}]\label{thm:Brenier}
Let $\mu,\nu$ be probability measures on $\R^n$. Suppose $\mu$ is absolutely continuous (with respect to the Lebesgue measure) and the following holds:
\[
\int_{\R^n}\frac{|x|^2}{2}d\mu(x)+\int_{\R^n}\frac{|y|^2}{2}d\nu(y)<\infty.
\]

Then there exists a convex function $\phi$ such that $\nabla \phi \#\mu=\nu$. Moreover, $\nabla \phi$ is the unique (i.e. uniquely determined $\mu$-almost everywhere) gradient of a convex function which pushes $\mu$ forward to $\nu$.

Furthermore, if $\nu$ absolutely continuous, then, for $\mu$-almost all $x$ and for $\nu$-almost all $y$,
\[
\nabla\phi^*\circ\nabla\phi(x)=x,~~~~~~~~~ \nabla\phi\circ\nabla\phi^*(y)=y,
\]

and $\nabla\phi^*$ is the ($\nu$-almost everywhere) unique gradient of a convex function which pushes $\nu$ forward to $\mu$, and also the solution of the Monge problem for transporting $\nu$ onto $\mu$ with a quadratic cost function.

\end{theorem}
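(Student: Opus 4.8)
The statement is classical, so I only outline the route I would follow; it is the standard one (see \cite{Villani:03}). The plan is to produce an optimal transference plan by compactness, to show it is concentrated on a cyclically monotone set, to invoke Rockafellar's theorem to place that set inside the graph of the subdifferential of a convex function, and then to use the absolute continuity of $\mu$ to conclude that this subdifferential is $\mu$-almost everywhere single valued, i.e.\ a genuine map. For existence of an optimal plan, note that $\Pi(\mu,\nu)$ is nonempty (it contains $\mu\otimes\nu$), tight (since $\mu$ and $\nu$ are tight), hence weakly relatively compact, and it is weakly closed; the cost $c(x,y)=\tfrac12|x-y|^2$ is lower semicontinuous and bounded below, and the moment hypothesis yields $I_c[\mu\otimes\nu]<\infty$, so $I_c$ attains its minimum over $\Pi(\mu,\nu)$ at some $\pi_0$.

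Next I would pass to the inner-product cost: minimising $\int\tfrac12|x-y|^2\,d\pi$ over $\Pi(\mu,\nu)$ is equivalent to maximising $\int\langle x,y\rangle\,d\pi$, since the remaining terms contribute the fixed finite constants $\tfrac12\int|x|^2\,d\mu$ and $\tfrac12\int|y|^2\,d\nu$. A mass-swapping argument along finite cycles, made rigorous through the duality theory for the Kantorovich problem, shows that $\pi_0$ is concentrated on a Borel cyclically monotone set $\Gamma\subseteq\R^n\times\R^n$: for all $(x_1,y_1),\dots,(x_m,y_m)\in\Gamma$ one has $\sum_{i=1}^m\langle y_i,\,x_{i+1}-x_i\rangle\le 0$, where $x_{m+1}=x_1$. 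By Rockafellar's theorem (see \cite{Rockafellar.Wets:97}), fixing a base point $(x_0,y_0)\in\Gamma$ and setting
\[
\phi(x)=\sup\Bigl\{\langle y_m,x-x_m\rangle+\langle y_{m-1},x_m-x_{m-1}\rangle+\cdots+\langle y_0,x_1-x_0\rangle\Bigr\},
\]
the supremum taken over all finite chains $(x_1,y_1),\dots,(x_m,y_m)$ in $\Gamma$, defines a proper lower semicontinuous convex function with $\Gamma\subseteq\{(x,y):y\in\partial\phi(x)\}$; hence $\pi_0$ is carried by the graph of $\partial\phi$. A finite convex function is differentiable off a Lebesgue-null set, and on its differentiability set $\partial\phi(x)=\{\nabla\phi(x)\}$; since $\mu\ll\lambda$ this happens for $\mu$-a.e.\ $x$, so $\pi_0=(\id\times\nabla\phi)\#\mu$ and, reading off the second marginal, $\nabla\phi\#\mu=\nu$. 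For uniqueness, any convex $\phi$ with $\nabla\phi\#\mu=\nu$ yields a plan carried by the (cyclically monotone) graph of a subdifferential, hence an optimal plan, the converse implication being valid under our moment hypotheses; so two maps $\nabla\phi_1,\nabla\phi_2$ pushing $\mu$ to $\nu$ give optimal plans $\pi_1,\pi_2$, whence $\tfrac12(\pi_1+\pi_2)$ is optimal and again carried by a single subdifferential graph, forcing $\nabla\phi_1=\nabla\phi_2$ $\mu$-a.e.

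For the last assertion, assume in addition $\nu\ll\lambda$, let $\phi^*$ be the convex conjugate of $\phi$, and recall the Fenchel identity $y\in\partial\phi(x)\iff x\in\partial\phi^*(y)$. The image $\check\pi_0\in\Pi(\nu,\mu)$ of $\pi_0$ under $(x,y)\mapsto(y,x)$ has the same transportation cost as $\pi_0$, and since the cost is symmetric, $\I_c(\nu,\mu)=\I_c(\mu,\nu)$, so $\check\pi_0$ is optimal for transporting $\nu$ onto $\mu$; moreover $\check\pi_0$ is carried by the graph of $\partial\phi^*$, and $\nu\ll\lambda$ makes $\partial\phi^*$ single valued $\nu$-a.e., whence $\check\pi_0=(\id\times\nabla\phi^*)\#\nu$, and in particular $\nabla\phi^*\#\nu=\mu$. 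By the same uniqueness argument, $\nabla\phi^*$ is the $\nu$-a.e.\ unique gradient of a convex function pushing $\nu$ forward to $\mu$, and it solves the Monge problem for transporting $\nu$ onto $\mu$ with quadratic cost. Finally, combining $\nabla\phi\#\mu=\nu$, $\nabla\phi^*\#\nu=\mu$ and the Fenchel identity gives $(\nabla\phi^*\circ\nabla\phi)(x)=x$ for $\mu$-a.e.\ $x$ and $(\nabla\phi\circ\nabla\phi^*)(y)=y$ for $\nu$-a.e.\ $y$.

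The genuinely analytic step, and the one I expect to be the main obstacle, is the passage from the heuristic ``swap a little mass along a cycle to decrease the cost'' to the rigorous statement that $\pi_0$ is concentrated on a \emph{Borel} cyclically monotone set; this is precisely the content of Kantorovich duality. The rest is compactness (for existence), convex analysis (Rockafellar's theorem and the Fenchel identity), and the routine measure theory of transference plans supported on graphs.
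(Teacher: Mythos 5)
The paper does not prove this theorem; it is stated as a classical result and cited directly to Theorem 2.12 of \cite{Villani:03}. Your outline is a correct and faithful sketch of precisely that standard argument (existence of an optimal plan by tightness and lower semicontinuity, reduction to maximising $\int\langle x,y\rangle\,d\pi$, cyclical monotonicity of the support via Kantorovich duality, Rockafellar's theorem to obtain $\partial\phi$, $\mu\ll\lambda$ to make $\nabla\phi$ single-valued a.e., uniqueness by averaging two optimal plans, and the symmetric argument with $\phi^*$ under $\nu\ll\lambda$), so there is nothing to compare against beyond agreeing that you have reproduced the route in the cited reference.
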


\begin{remark}
The above result is known to hold not only for absolutely continuous measures, but more general results are not needed in this paper.
\end{remark}

Now we are ready to review the Example \ref{ex:real} in the context of optimal transport theory.

\subsection{The main idea}

Let $f=\cdf \mu_M$ be the function from the example.
Note that $f$ is a transport map from $\mu_M$ to the Lebesgue measure $\lambda$ (that is, $\lambda=f\#\mu_M$). In fact, by the optimal transportation theorem for a quadratic cost of $\R$ (see Theorem 2.18 in \cite{Villani:03}), $f$ is the (unique) optimal transport map from $\mu_M$ to $\lambda$. Unlike in higher dimensions, on the real line, the form of the optimal transport map is known and in our case (a special case of transporting $\mu_M$ onto $\lambda$), the function $f$ is the optimal one.

Note that the derivative $D_\lambda \mu_M(z)$ of $\mu_M$ with respect to the Lebesgue measure  does not exist. Intuitively, $\mu_M$ oscillates around $z$ and, correspondingly, the transport map is not differentiable at $z$.

\subsection{Wasserstein metrics}

Given a Polish metric space $(X,d)$, the set $\P(X)$ of Borel probability measures over $X$ endowed with the weak topology is a Polish  space. 

Suppose $(X,d,\seq \alpha)$ is a computable metric space where $d$ is bounded. Let $\seq \delta$ be an effective enumeration of those elements of $\P(X)$ which are concentrated on finite subsets of special points and assign rational values to them. Let $\pi$ be the Prokhorov metric on $\P(X)$, then $(\P(X),\pi, \seq \delta)$ is a computable metric space compatible with the weak topology on $\P(X)$. Following \cite{Gacs:05} and \cite{Hoyrup.Rojas:09}, we define computable measures as computable elements of $(\P(X),\pi, \seq \delta)$. 

For $p\in\N$ with $p\ge 1$, define the cost function $c_p$ by $c_p(x,y)=d(x,y)^p$. 
For $\mu,\nu\in\P(X)$, define the \emph{Wasserstein metric of order $p$} by $$W_p(\mu,\nu)=\I_p(\mu,\nu)^{1/p}$$ where $\I_p$ is the optimal transport cost between $\mu$ and $\nu$ with respect to $c_p$.   It is known that $W_p$ metrizes the weak topology on $\P(X)$. Furthermore, $W_1$ is computable and it is\emph{ computably equivalent} to $\pi$ \cite{Hoyrup.Rojas:09}. That is, given a Cauchy name of $\mu$ with respect to $\pi$, it is possible to compute a Cauchy name  of $\mu$ with respect to $W_1$ and vice versa. Since we are mainly concerned with the quadratic cost, we need to prove an analogous result for $p>1$. 

\begin{proposition}
Let $(X,d,\seq \alpha)$ be a computable metric space where $d$ is bounded.
 Let $p> 1$. Then $W_p$ is computably equivalent to $W_{1}$ (and hence to $\pi$).
\end{proposition}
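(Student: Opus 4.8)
The plan is to establish computable equivalence of $W_p$ and $W_1$ in two directions, exploiting the fact that $d$ is bounded, say by some computable constant $D \ge 1$. The easy direction is bounding $W_1$ in terms of $W_p$: by Jensen's inequality (or Hölder), for any transference plan $\pi \in \Pi(\mu,\nu)$ we have $\int d(x,y)\,d\pi \le \left(\int d(x,y)^p\,d\pi\right)^{1/p}$, so taking infima gives $W_1(\mu,\nu) \le W_p(\mu,\nu)$. This inequality is uniform and requires no computation, so from a Cauchy name of $\mu$ with respect to $W_p$ we immediately get one with respect to $W_1$.

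The substantive direction is bounding $W_p$ in terms of $W_1$. Here I would use boundedness of $d$: since $d(x,y) \le D$, we have $d(x,y)^p = d(x,y) \cdot d(x,y)^{p-1} \le D^{p-1} d(x,y)$, hence for any plan $\pi$, $\int d(x,y)^p\,d\pi \le D^{p-1}\int d(x,y)\,d\pi$, and taking infima over $\Pi(\mu,\nu)$ yields $\I_p(\mu,\nu) \le D^{p-1}\,\I_1(\mu,\nu)$, i.e. $W_p(\mu,\nu) \le D^{(p-1)/p} W_1(\mu,\nu)^{1/p}$. (One should check that the infimum over plans of $\int d^p$ is genuinely $\I_p$; this is standard since optimal plans exist by compactness/tightness arguments, but even without that the inequality between infima holds because every plan used on the right can be reused on the left.) Combining the two bounds, $W_1(\mu,\nu) \le W_p(\mu,\nu) \le D^{(p-1)/p} W_1(\mu,\nu)^{1/p}$, so the two metrics induce the same uniform structure with an explicit, computable modulus of mutual continuity.

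**Conclusion.** Given these explicit inequalities with computable constants, the computable equivalence follows routinely: a Cauchy name of $\mu$ with respect to $W_1$ consists of special measures $\delta_{k}$ with $W_1(\mu,\delta_k) \le 2^{-k}$; feeding this through $W_p(\mu,\delta_k) \le D^{(p-1)/p} (2^{-k})^{1/p}$ and reindexing gives a Cauchy name with respect to $W_p$, and conversely $W_1(\mu,\delta_k) \le W_p(\mu,\delta_k)$ handles the other direction. Since $W_1$ is already known to be computably equivalent to the Prokhorov metric $\pi$ by \cite{Hoyrup.Rojas:09}, transitivity of computable equivalence finishes the proof. The main obstacle, such as it is, is purely bookkeeping: making sure the passage from metric inequalities to the manipulation of Cauchy names respects uniformity (the reindexing must be computable and independent of $\mu$), and confirming that the special points $\seq \delta$ used to define names for $W_p$ are the same countable dense set as for $W_1$ — which they are, since both are the finitely-supported rational-valued measures on special points of $X$.
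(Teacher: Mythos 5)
Your proof is essentially correct and rests on the same key fact the paper uses, namely the two-sided inequality $W_1 \le W_p \le D^{1-1/p}\,W_1^{1/p}$ (the paper cites Villani 7.1.2; you derive both halves from scratch via Jensen and the elementary bound $d^p \le D^{p-1}d$, which is a nice self-contained touch). Where you differ is in how the inequalities are turned into a name conversion: you observe that the inequality lets you \emph{directly reindex} a given Cauchy name, i.e.\ if $W_1(\mu,\delta_{k_i}) \le 2^{-i}$ then $W_p(\mu,\delta_{k_i}) \le D^{1-1/p}2^{-i/p}$, so the same sequence of special points, read off with a computable shift, is a $W_p$-name; the paper instead runs a triangle-inequality estimate on $W_p(\mu,\delta_j)$ against the terms of the name. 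These are equivalent, and yours is arguably cleaner. The one thing you leave out that the paper addresses explicitly is that $W_p(\delta_i,\delta_j)$ is uniformly computable in $i,j,p$ (the paper notes this follows because optimal transport between finitely supported rational measures is a finite linear program). This is not needed for your reindexing step, but it \emph{is} needed for $(\P(X),W_p,\seq\delta)$ to be a computable metric space at all, and hence for ``computably equivalent'' to be well-posed; you should say a word about it rather than taking it for granted. With that sentence added, your argument is complete.
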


\begin{proof}
Firstly, note that $W_p(\delta_i,\delta_j)$ is computable uniformly in $i,j,p$. This is due to the fact, that computing  $W_p$ between discrete measures is a linear programming problem, for which there are algorithms available. (\textbf{TODO: some refs})

\n It is known (see 7.1.2 in \cite{Villani:03}) that the following inequalities hold: 
\begin{align}\label{label_2}
W_1\le W_p\le W_1^{1/p}\text{diam}(X)^{1-1/p}. 
\end{align}

Fix a computable real $D$ with $D\ge\text{diam}(X)$.
Let $\seq \mu$ be Cauchy names of $\mu$ with respect to $\pi$. Let $i,j\in\N$. Using \ref{label_2} and the triangle inequality we have
\begin{align*}
W_p(\mu,\delta_j)\le W_p(\delta_j,\mu_i)+W_p(\mu_i,\mu)\le \\
D\cdot W_1^{1/p}(\mu,\mu_i)+W_p(\delta_j,\mu_i).
\end{align*}
This shows that we can effectively find a Cauchy name with respect to $W_p$ given a Cauchy name with respect to $\pi$.

For the other direction, suppose $\seq{\hat\mu}$ is a Cauchy name of $\mu$ with respect to $W_p$. Then 
\begin{align*}
W_1(\mu,\delta_j)\le W_1(\delta_j,\hat\mu_i)+W_1(\hat\mu_i,\mu)\le \\
W_1(\delta_j,\hat\mu_i)+W_p(\hat\mu_i,\mu).
\end{align*}
The required result follows.
\end{proof}

\begin{corollary} $\I_p(\mu,\nu)$ is computable uniformly in $\mu,\nu$ and $p$.
\end{corollary}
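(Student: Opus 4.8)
The plan is to prove the corollary by a straightforward reduction to the Wasserstein metrics just established. Recall that for $p \geq 1$ and probability measures $\mu,\nu$ on $X$, we have $\I_p(\mu,\nu) = W_p(\mu,\nu)^p$ by definition, so it suffices to show that $W_p(\mu,\nu)$ is computable uniformly in $\mu,\nu$ and $p$, since raising to the $p$-th power (for integer $p$) is a computable operation and $\I_p$ is then obtained by composition. Thus the entire content is the uniform computability of the map $(p,\mu,\nu) \mapsto W_p(\mu,\nu)$.

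\textbf{Key steps.} First I would recall from the proof of the preceding proposition that $W_p(\delta_i,\delta_j)$ is computable uniformly in $i,j,p$ (the linear-programming observation). Second, I would use the two directions of that proof to obtain, uniformly in $p$, a procedure that converts a Cauchy name of $\mu$ with respect to $\pi$ into a Cauchy name of $\mu$ with respect to $W_p$: from inequality \ref{label_2} and the triangle inequality, given $\mu_i \to \mu$ in $\pi$ (equivalently in $W_1$) we can bound $W_p(\mu,\delta_j)$ in terms of $D \cdot W_1^{1/p}(\mu,\mu_i)$ and the computable quantity $W_p(\delta_j,\mu_i)$, and all of these estimates are uniform in $p$. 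Third, having Cauchy names of $\mu$ and of $\nu$ with respect to $W_p$, I would invoke the fact that $W_p$ is itself a metric on the computable metric space $(\P(X), \pi, \seq \delta)$ whose restriction to special points is uniformly computable, so that $W_p(\mu,\nu)$ is computed uniformly from these names by the standard fact that a metric is computable on a computable metric space; tracking the parameter $p$ through each of these steps shows the whole thing is uniform in $p$ as well.

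\textbf{Main obstacle.} The only genuine subtlety is bookkeeping the dependence on $p$: one must check that every constant, rate of convergence, and modulus appearing in the argument (in particular the exponents $1/p$ and $1 - 1/p$ in \ref{label_2}, and the convergence rate of the $W_p$-Cauchy name one constructs) depends computably and uniformly on $p$, rather than merely being computable for each fixed $p$. Since $p$ ranges over positive integers and the expressions involved ($D^{1-1/p}$, $W_1^{1/p}$, and so on) are all given by elementary computable functions of $p$ and a real argument, this uniformity is routine but should be stated. With that in hand, $\I_p(\mu,\nu) = W_p(\mu,\nu)^p$ is computable uniformly in $\mu,\nu,p$, completing the proof.
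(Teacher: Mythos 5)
Your proposal is correct and is essentially the argument the paper intends (the corollary is stated without an explicit proof, as an immediate consequence of the preceding proposition). You correctly reduce $\I_p = W_p^p$ to uniform computability of $W_p$, reuse the two name-conversion estimates from the proposition's proof together with the uniform computability of $W_p$ on the special points $\delta_i$, and rightly flag the only real point to check — that the bounds and moduli in inequality (\ref{label_2}) and the Cauchy-name translation depend computably on the integer parameter $p$.
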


\subsection{An effective version of Brenier's theorem}

\begin{theorem}\label{effective_brenier_1}
Let $\mu,\nu$ be absolutely continuous computable probability measures on $\R^n$ with $\text{supp}(\mu)=\zn$. There exists a computable convex function $\phi:\R^n\to\R$ such that $\nabla\phi$ is the optimal transport map from $\mu$ to $\nu$.

\begin{proof}

From Theorem \ref{thm:Brenier} we know that there a unique convex function $\phi$ such that $\phi(0)=0$ and $\nabla\phi$ is the optimal transformation map from $\mu$ to $\lambda$. Since it doesn't matter how $\phi$ is defined outside of $\text{supp}(\mu)$, we may assume $\phi$ is Lipschitz. Pick some rational $K\in\Q$ so that $K>\Lip(\phi)$ and consider the subspace 
\[
L_0(K)=\{f\in C\zn: \Lip{f}\le K\text{ and } f(0)=0\}.
\]

By Arzela-Ascoli theorem, $L_0(K)$ is a compact subspace of $C\zn$ (the space of real valued continuous functions endowed with the supremum metric) containing $\phi $. Moreover, since the support of $\mu$  is equal to $\zn$ and $\nabla\phi$ is uniquely determined $\mu-$a.e.,  $\phi$ is the only function in $L_0(K)$ for which $\nabla\phi$ is optimal. 

Recall, that a function $f:\R^n\to \R$ is called \emph{piecewise affine} if there exists a finite set of affine functions $f_i(x)=A_i\cdot x+b_i$, $i=1,\dots,k$, such that the inclusion $f(x)\in\{f_i(x),\dots,f_k(x)\}$ holds for all $x$. The functions $f_i$ are called \emph{selection functions}. The set of pairs $(A_i,b_i)$ is called a collection of \emph{matrix-vector pairs} corresponding to $f$. If $f$ is a piecewise affine function and $(A_i,b_i)$ for $i\le k$ are the corresponding matrix-vector pairs, there exists a finite number of index sets $M_1,\dots, M_l\subseteq \{1,\dots,k\}$ such that 

\[
f(x)=\max_{1\le i\le l}\min_{j\in M_i}A_j\cdot x+b_j\text{ for all }x.
\]

For every $i\in\N^+$, let $\D_i^n$ denote the set of points in $\R^n$ with all coordinates of  the form $k2^{-i}$ for some integer $k$.

It is known that piecewise affine functions are dense in $L_0(K)$. For $k\in\N^+$, let $\Gamma_k$ be the (finite) set of piecewise affine functions $f$ such that 
\begin{enumerate}
\item all of its matrix-vector pairs belong to $\D^n_k\times D^1_k$,
\item  $\Lip(f)\le K$, and
\item  $f(0)=0$.
\end{enumerate}

 Note that it is possible to effectively enumerate elements of $\Gamma_k$ uniformly in $k$.  Let $\seq\gamma$ an effective enumeration of $\bigcup_k\Gamma_k$. It is dense in $L_0(K)$.

\begin{claim} $L=\left(L_0(K),\|\cdot\|_\infty,\seq\gamma\right)$ is an effectively compact computable metric space.
\end{claim}
\begin{proof}
$L$ is clearly a computable metric space. To show that it is effectively compact, fix $i\in\N^+$. Then $\{B(\gamma; 2^{-i})~|~\gamma\in \Gamma_i\}$ form a finite open cover of $L$. 
\end{proof}

\n For any $f,g:\R^n\to \R$ let 
\[
J(f,g)=\int_{\R^n}f(x)~d\mu(x)+\int_{\R^n}g(x)~d\lambda(x).
\]

\n We know that $J(\phi,\phi^*)=\I_2(\mu,\lambda)$ (see the proof of Theorem 2.12 in \cite{Villani:03}).

Define 
\[
S=\{f\in L_0(K)~|~ J(f,f^*)=\I_2(\mu,\lambda)\}.
\]

The condition $J(f,f^*)=\I_2(\mu,\lambda)$ is (uniformly) computable in $f$ and $\mu$. Hence $S$ is a $\arpi01$ subset. Since $\phi$ is uniquely defined $\mu-$a.e.,  $S$ contains only one element - $\phi$.

Let us show that effective compactness of $L_0(K)$ guarantees computability of $\phi$. The set $A_S$ of basic open balls disjoint from $S$ is recursively enumerable. As we have shown, for a given $j\in\N$, we can find a finite cover of $L_0(K)$ by basic open $2^{-j}-$balls.  Let us denote such covers  $I_j$. 
Fix $i\in\N$. Enumerate elements of  $A_S$ and elements of those $I_j$, where $j\ge i,$ until all the balls in $I_{i+3}$ that has not been enumerated so far have centers at most $2^{-i-1}$ from each other. Let $\gamma$ be one such center. Then the basic open ball $B(\gamma; 2^{-i})$ contains $S$. Therefore, $\phi$ is computable.

\end{proof}
\end{theorem}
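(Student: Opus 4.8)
The plan is to reduce the computability of $\phi$ to extracting a computable point from an effectively closed singleton that sits inside an effectively compact computable metric space. First I would apply Brenier's theorem (Theorem~\ref{thm:Brenier}) to obtain the convex potential $\phi$ with $\nabla\phi\#\mu=\nu$, normalized by $\phi(0)=0$; since only the restriction of $\phi$ to $\mathrm{supp}(\mu)=\zn$ matters for the transport problem, I may take $\phi$ to be Lipschitz and fix a rational $K>\Lip(\phi)$. Then I would work inside $L_0(K)=\{f\in C\zn:\Lip(f)\le K,\ f(0)=0\}$, which is compact by the Arzel\`a--Ascoli theorem and contains $\phi$. The point of the hypothesis $\mathrm{supp}(\mu)=\zn$ is to promote the merely $\mu$-almost-everywhere uniqueness of the optimal gradient given by Brenier's theorem to honest uniqueness: $\phi$ is the \emph{only} function in $L_0(K)$ whose gradient is the optimal transport map from $\mu$ to $\nu$.

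Next I would equip $L_0(K)$ with the canonical dense sequence $\seq\gamma$ made of the piecewise affine functions whose matrix-vector pairs have dyadic rational entries, are $K$-Lipschitz, and vanish at $0$, organized into the finite families $\Gamma_k$. Density of $\seq\gamma$ follows from the density of piecewise affine functions in $L_0(K)$ together with their explicit $\max$--$\min$ representation, while the balls of radius $2^{-k}$ around $\Gamma_k$ provide uniformly computable finite covers; hence $\left(L_0(K),\|\cdot\|_\infty,\seq\gamma\right)$ is an effectively compact computable metric space.

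Then I would pin $\phi$ down by a $\PiOne$ condition. Writing $J(f,g)=\int f\,d\mu+\int g\,d\nu$, Kantorovich duality for the quadratic cost (as in the proof of Theorem~2.12 in \cite{Villani:03}) yields $J(f,f^*)\ge\I_2(\mu,\nu)$ for every $f$, with equality precisely when $\nabla f$ is the optimal transport map; combined with the uniqueness noted above, this shows that $S:=\{f\in L_0(K):J(f,f^*)=\I_2(\mu,\nu)\}=\{\phi\}$. Since the convex conjugate $f\mapsto f^*$ is computable on the compact cube, $J(f,f^*)$ is computable in $f,\mu,\nu$, and $\I_2(\mu,\nu)$ is computable by the preceding Corollary, the predicate defining $S$ is $\PiOne$, so $S$ is an effectively closed subset of $L_0(K)$. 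Finally I would extract $\phi$: enumerating the basic balls disjoint from $S$ and using effective compactness, for each $i$ one waits until the finitely many balls of a sufficiently fine cover that have not been excluded all have centres within $2^{-i}$ of one another, and any such centre is a $2^{-i}$-approximation to the unique point of $S$; this yields a Cauchy name for $\phi$, and its gradient $\nabla\phi$ is the optimal transport map by Brenier's theorem.

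The step I expect to be the main obstacle is showing that $S$ is a singleton. On the one hand this means exploiting $\mathrm{supp}(\mu)=\zn$ carefully to upgrade the $\mu$-a.e.\ uniqueness clause of Brenier's theorem to pointwise uniqueness within $L_0(K)$; on the other hand it means verifying that the duality equality $J(f,f^*)=\I_2(\mu,\nu)$ is genuinely equivalent to $\nabla f$ being the optimal map, rather than merely to $f$ being some dual-feasible potential. A secondary technical point is confirming that the abstract principle ``an effectively closed singleton in an effectively compact computable metric space is a computable point'' really goes through with the explicit finite covers furnished by the $\Gamma_k$'s.
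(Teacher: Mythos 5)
Your proposal follows essentially the same route as the paper: invoke Brenier's theorem, restrict to the compact space $L_0(K)$, build the effectively compact computable metric space out of dyadic piecewise affine functions, characterize $\phi$ as the unique solution of the $\Pi^0_1$ duality condition $J(f,f^*)=\I_2(\mu,\nu)$, and extract it via effective compactness. The obstacles you flag (upgrading $\mu$-a.e.\ uniqueness to uniqueness in $L_0(K)$ using $\mathrm{supp}(\mu)=\zn$, the equivalence between the duality equality and optimality, and the singleton-extraction principle) are precisely the points the paper relies on, so your reading is on target; in fact you use $\nu$ consistently where the paper silently specializes to $\lambda$, which is a small improvement.
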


\subsection{Application to computable randomness on $\R^n$}

In this subsection we prove the following converse to the Theorem \ref{theorem_monotone}.
\begin{theorem}\label{thm:monotone}
Suppose $z\in\R^n$ is not computably random. Then there exists a computable monotone function $f:\R^n\to\R^n$ not differentiable at $z$.
\end{theorem}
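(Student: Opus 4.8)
The plan is to mirror the one-dimensional construction from Example \ref{ex:real} through the lens of optimal transport, using the effective Brenier theorem (Theorem \ref{effective_brenier_1}) as the replacement for the cumulative distribution function. First I would reduce to the case $z\in\zn$ (by a computable translation of the cube) and let $Z$ be the binary expansion of $z$. Since $z$ is not computably random, by Proposition \ref{cr_characterisation_ml} either $z$ is unrepresented — a case I would handle separately, since such points form a $\arpi01$ null set and one can build a computable function collapsing a neighbourhood of $z$ — or there is a computable martingale $M$ with the savings property succeeding on $Z$. In the latter (main) case, as in the real-line example, I would define a computable probability measure $\mu_M$ on $\zn$ by $\mu_M([\sigma]) = M(\sigma)\cdot \lambda([\sigma])$ using the fixed cell decomposition $\A_n$; this is absolutely continuous, computable, and has $\text{supp}(\mu_M)=\zn$ because $M$ is a nonnegative martingale that (being nontrivial along $Z$) stays positive on a dense set of cells — if necessary I would mix with a small multiple of $\lambda$ to guarantee full support and strict positivity of the density.

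Next I would apply Theorem \ref{effective_brenier_1} with $\mu = \mu_M$ and $\nu = \lambda\uhr{\zn}$ (both absolutely continuous, computable, with $\text{supp}(\mu)=\zn$) to obtain a computable convex function $\phi:\R^n\to\R$ whose gradient $\nabla\phi$ is the optimal transport map pushing $\mu_M$ forward to $\lambda$. The candidate witness is then $f = \nabla\phi$, which is monotone (the gradient of a convex function is always a monotone map) and computable. It remains to extend $f$ to all of $\R^n$ as a computable monotone function — e.g. by using $\nabla\psi$ for $\psi$ a computable convex extension of $\phi$, or simply by noting $\nabla\phi$ is already defined on all of $\R^n$ since $\phi$ is; I would just need that monotonicity on $\zn$ extends, which follows from convexity of $\phi$ globally.

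The crux is showing $f=\nabla\phi$ fails to be differentiable at $z$. The intuition, exactly as articulated after Example \ref{ex:real}, is that the density of $\mu_M$ with respect to $\lambda$ — which is (essentially) $\lim_\sigma M(\sigma)$ along cells shrinking to a point — oscillates near $z$ because $M$ succeeds on $Z$, i.e. $\limsup_\sigma M(\sigma) = \infty$ while $\liminf$ stays bounded (or the Radon–Nikodym derivative simply fails to exist at $z$). I would make this rigorous by relating differentiability of the transport map $\nabla\phi$ at $z$ to the existence of the derivative $D_\lambda\mu_M(z)$: if $\nabla\phi$ were differentiable at $z$ with derivative (necessarily symmetric positive semidefinite) $A$, then by the change-of-variables / area formula for Lipschitz-or-monotone maps the density of $\mu_M$ at $z$ would equal $\det A$, forcing $D_\lambda\mu_M(z)$ to exist — contradicting that $M$ succeeds on $Z$. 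This is the step I expect to be the main obstacle: carefully extracting non-differentiability of the optimal map from oscillation of the density, since $\nabla\phi$ need not be Lipschitz and one must argue at the single point $z$ rather than a.e.; I would likely invoke Proposition \ref{inverse_diff_proposition} together with Theorem \ref{critical_point_theorem} in contrapositive form, or a direct Aleksandrov-type second-order argument for convex functions, to tie differentiability of $\nabla\phi$ at $z$ precisely to the behaviour of $M$ along $Z$.
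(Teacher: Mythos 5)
Your overall strategy — build a computable measure $\mu$ with oscillating density at $z$, push it to Lebesgue via the effective Brenier map, and read non-differentiability off the density oscillation — matches the paper's. The non-differentiability step is also essentially right: you correctly identify that differentiability of $\nabla\phi$ at $z$ would force $D_\lambda\mu(z)$ to exist, via an Aleksandrov/second-order Jacobian argument (the paper cites McCann's Theorem A.2 for exactly this; your fallback of using Proposition~\ref{inverse_diff_proposition} and Theorem~\ref{critical_point_theorem} in contrapositive does not actually work, since those lemmas run the forward direction, but your ``Aleksandrov-type'' suggestion is the right one).

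The genuine gap is the assertion that $f = \nabla\phi$ ``is monotone\dots and computable.'' Theorem~\ref{effective_brenier_1} gives you computability of $\phi$, not of $\nabla\phi$. A computable convex function can easily have a non-computable (indeed, discontinuous) gradient, so this step requires a real argument. The paper handles it by invoking Caffarelli regularity theory (Theorem 4.13 in Villani): if the density $D_\lambda\mu$ is bounded away from both $0$ \emph{and} $\infty$, then $\phi$ is $C^{1,\alpha}$, and a computable convex function that is $C^{1,\alpha}$ does have a computable gradient. But your choice of $\mu_M$ with a \emph{succeeding} martingale $M$ (so $\limsup_\sigma M(\sigma)=\infty$ along $Z$) makes the density unbounded near $z$, so the regularity theory does not apply and computability of $\nabla\phi$ is not secured. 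Mixing with a small multiple of $\lambda$ fixes the lower bound but does nothing for the upper one. The paper instead constructs a \emph{bounded} computable martingale $M$ (a modification of the oscillating martingale from Freer--Kjos-Hanssen--Nies--Stephan) that stays within fixed rational bounds $0 < q-1 \le M \le p+1$ while still forcing $D_\lambda\mu_M(z)$ not to exist via the ``one-third trick'' for dyadic cubes. This bounded-oscillation construction is the ingredient your proposal is missing, and without it the witness $\nabla\phi$ need not be computable. (A smaller point: the paper's reduction is to the case where every coordinate of $z$ is computably random — needed to apply Rute's Theorem 5.3(4) — rather than a mere translation into $\zn$; and the case where $z$ is unrepresented is absorbed there, not treated separately.)
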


We may assume that every coordinate of $z$ is computably random. For suppose $z_i$ is not computably random for some $i$. There exists a computable monotone function $g:\R\to\R$ not differentiable at $z_i$. Then the function $(x_1,\dots,x_n)\mapsto (g(x_1),\dots,g(x_n))$ is a computable monotone function from $\R^n$ to $\R^n$ not differentiable at $z$.

Since $z$ is not computably random, there exists an absolutely continuous computable probability measure $\mu$ on $\R^n$ such that $D_\lambda\mu(z)$ does not exist. This follows from Theorem 5.3(4) \cite{Rute:14:1} and the fact that all coordinates of $z$ are computably random (and hence non-dyadic). Without loss of generality we may assume that $\mu$ is supported on $\zn$.
The following classical result is needed to show that the optimal transport map (from $\mu$ onto $\lambda$) is not differentiable at $z$.

\begin{theorem}[Jacobian theorem for monotone maps, cf. Theorem A.2 in \cite{McCann:97}]\label{thm:A2}
Let $\phi$ be a convex function on $\R^n$ and suppose it is twice differentiable at $x\in\R^n$. Then 

\[
\lim_{r\to0} \frac{\lambda\left(\partial\phi (B_r(x))\right)}{\lambda\left(B_r(x)\right)} = \det D_A^2 \phi(x).
\]

\end{theorem}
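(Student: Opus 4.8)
The plan is to normalize the configuration and then estimate $\lambda(\partial\phi(B_r(x)))$ from above and from below by separate arguments. After translating and subtracting a supporting affine function of $\phi$ at $x$ (which only translates the set $\partial\phi(E)$ for every $E$, hence changes neither the numerator nor the denominator nor $\det D_A^2\phi$), I may assume $x=0$, $\phi(0)=0$, and $0\in\partial\phi(0)$, i.e.\ $\nabla\phi(0)=0$, so that $0$ is a global minimum of $\phi$. Write $A=D_A^2\phi(0)$; since $\phi$ is convex, $A$ is symmetric positive semidefinite. I will use that, for convex functions, twice differentiability at $0$ is equivalent to asymptotic linearity of the subdifferential: for every $\varepsilon>0$ there is $\delta>0$ such that $|p-Ah|\le\varepsilon|h|$ whenever $|h|\le\delta$ and $p\in\partial\phi(h)$ (at a point where $\phi$ fails to be differentiable this holds because $\partial\phi(h)$ is then the convex hull of limits of gradients at nearby differentiability points). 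In particular $\partial\phi(B_r(0))\subseteq A(B_r(0))+\varepsilon r\,B_1(0)$ for all $r\le\delta$.

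For the upper bound, suppose first $\det A>0$. Then $A(B_r)+\varepsilon r B_1\subseteq A\bigl(B_r+\varepsilon r A^{-1}(B_1)\bigr)\subseteq A\bigl(B_{(1+\varepsilon\|A^{-1}\|)r}\bigr)$, so $\lambda(\partial\phi(B_r(0)))\le\det A\,(1+\varepsilon\|A^{-1}\|)^n\lambda(B_r(0))$; letting $r\to0$ and then $\varepsilon\to0$ gives $\limsup_{r\to0}\lambda(\partial\phi(B_r(0)))/\lambda(B_r(0))\le\det A$. If instead $A$ has rank $k<n$, then $A(B_r)$ is a $k$-dimensional ellipsoid of radii $O(r)$ lying in a $k$-dimensional subspace, so $A(B_r)+\varepsilon r B_1$ is contained in the $\varepsilon r$-neighbourhood of that ellipsoid, whose $n$-dimensional measure is $O\bigl(r^k(\varepsilon r)^{n-k}\bigr)$; since $n-k\ge1$, the same limit procedure yields $\limsup\le0=\det A$. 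This settles the upper bound in all cases, and in particular the whole statement when $\det A=0$, since the lower bound is then trivial.

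For the lower bound when $\det A>0$ I would use Legendre duality. The conjugate $\phi^{*}$ is twice differentiable at $\nabla\phi(0)=0$ with $D_A^2\phi^{*}(0)=A^{-1}$, hence $\nabla\phi^{*}(p)=A^{-1}p+o(|p|)$ as $p\to0$. By the Fenchel relation $p\in\partial\phi(y)\iff y\in\partial\phi^{*}(p)$, so for every $p$ outside the Lebesgue-null set where $\phi^{*}$ fails to be differentiable one has $p\in\partial\phi(B_r(0))\iff\nabla\phi^{*}(p)\in B_r(0)$; consequently $\lambda(\partial\phi(B_r(0)))=\lambda\bigl(\{p:\nabla\phi^{*}(p)\in B_r(0)\}\bigr)$. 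Given $\varepsilon>0$, fix $\delta$ with $|\nabla\phi^{*}(p)-A^{-1}p|\le\varepsilon|p|$ for $|p|\le\delta$. If $r$ is small enough that $\|A\|r<\delta$, then any $p$ with $|A^{-1}p|<r/(1+\varepsilon\|A\|)$ satisfies $|p|\le\|A\|\,|A^{-1}p|<\delta$ and $|\nabla\phi^{*}(p)|\le|A^{-1}p|+\varepsilon|p|\le(1+\varepsilon\|A\|)|A^{-1}p|<r$, so the ellipsoid $A\bigl(B_{r/(1+\varepsilon\|A\|)}(0)\bigr)$ is contained, up to a null set, in $\{p:\nabla\phi^{*}(p)\in B_r(0)\}$. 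Therefore $\lambda(\partial\phi(B_r(0)))\ge\det A\,(1+\varepsilon\|A\|)^{-n}\lambda(B_r(0))$, and letting $r\to0$ then $\varepsilon\to0$ gives $\liminf\ge\det A$. Combined with the upper bound, this proves the theorem.

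The step I expect to be the main obstacle is the duality fact invoked above: that $\phi^{*}$ inherits twice differentiability at the conjugate point, with the inverse Hessian, when $A$ is invertible. The mechanism is that the supremum defining $\phi^{*}(p)=\sup_{y}[\langle p,y\rangle-\phi(y)]$ is, for small $p$, essentially attained near $y=A^{-1}p$: positive definiteness of $A$ makes $\phi$ grow at least linearly away from $0$, which localizes the supremum, and then the expansion $\phi(y)=\tfrac12\langle Ay,y\rangle+o(|y|^{2})$ transfers through the Legendre transform to yield $\phi^{*}(p)=\tfrac12\langle A^{-1}p,p\rangle+o(|p|^{2})$ together with the matching gradient expansion. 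This is classical, and is precisely where twice (rather than merely once) differentiability of $\phi$ is used; making the localization and the quantitative transfer of the error rigorous is the technical heart of the argument. An alternative route for the lower bound, avoiding conjugates, is a Brouwer-degree argument: on $B_r(0)$ the a.e.\ defined map $\nabla\phi$ is a small perturbation of the linear isomorphism $A$, hence covers all but a vanishing fraction of $A(B_r(0))$; one could also quote the comparison lemmas for the Monge--Amp\`ere measure $E\mapsto\lambda(\partial\phi(E))$.
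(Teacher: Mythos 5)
The paper does not prove this theorem; it cites it directly from Theorem A.2 of \cite{McCann:97}, so there is no in-paper argument to compare against. What you have written is, in outline, McCann's own proof: normalize so that $x=0$, $\nabla\phi(0)=0$; get the upper bound from the inclusion $\partial\phi(h)\subseteq Ah+o(|h|)B_1$ (which is the correct reformulation of Alexandrov twice differentiability of a convex function), handling the degenerate case $\det A=0$ by a tube estimate around the lower-dimensional ellipsoid; and get the lower bound when $\det A>0$ by passing to the conjugate and using that $\phi^{*}$ is twice differentiable at $0$ with Hessian $A^{-1}$. That last fact is precisely Lemma A.1 of \cite{McCann:97}. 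The steps you wrote out in full are correct, including the Fenchel reduction $\lambda(\partial\phi(B_r))=\lambda(\{p:\nabla\phi^{*}(p)\in B_r\})$ modulo the Lebesgue-null set of nondifferentiability of $\phi^{*}$, and the translation argument at the start is legitimate since subtracting a supporting affine function only translates every $\partial\phi(E)$.

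The one real gap is the step you flag yourself: the inheritance of twice differentiability by $\phi^{*}$ with the inverse Hessian. Your heuristic is the right mechanism but needs to be made quantitative. From $\phi$ convex, $\nabla\phi(0)=0$, $A\succ 0$ and the second-order expansion, one gets $\phi(y)\ge c|y|^{2}$ on $B_{\delta}$, hence $\phi(y)\ge c\delta|y|$ for $|y|\ge\delta$ by convexity along rays from the origin; so for $|p|<c\delta$ the supremum in $\phi^{*}(p)=\sup_{y}[\langle p,y\rangle-\phi(y)]$ is attained on $B_{\delta}$, and then $c|y_p|^{2}\le\phi(y_p)\le|p|\,|y_p|$ forces $|y_p|\le|p|/c\to 0$. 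Inserting $\phi(y)=\tfrac12\langle Ay,y\rangle+o(|y|^{2})$ and comparing with the exact quadratic's maximizer $y=A^{-1}p$ then gives $\phi^{*}(p)=\tfrac12\langle A^{-1}p,p\rangle+o(|p|^{2})$, with the matching first-order expansion of $\partial\phi^{*}$ following by the same convexity argument you invoked for $\partial\phi$. Since the paper itself simply cites this machinery, citing Lemma A.1 alongside Theorem A.2 is perfectly acceptable; but a self-contained write-up cannot leave this step as a plausibility argument, and you are right to single it out as the technical heart.
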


Theorem \ref{thm:Brenier} shows that there exists a unique monotone function $f=\nabla \phi$, such that $f\#\mu=\lambda$, where $\phi$ is some convex function (obviously, not unique). Since $\mu$ is absolutely continuous, by Theorem 2.12 (iv) and Lemma 4.6 from \cite{Villani:03}, $\lambda\left(\partial \phi(A)\right)=\lambda\left(\nabla \phi(A)\right)$ for all Borel $A\subseteq \R^n$. Hence 

\[
\lim_{r\to0} \frac{\lambda\left(\partial\phi (B_r(x))\right)}{\lambda\left(B_r(x)\right)}=\lim_{r\to0} \frac{\lambda\left(\nabla\phi (B_r(x))\right)}{\lambda\left(B_r(x)\right)}=D_\lambda \mu(x)
\] for all $x$. By Theorem \ref{thm:A2}, $\nabla\phi$ is not differentiable at $z$.

To complete the proof of Theorem \ref{thm:monotone}, we need to show that $f$ is a computable function. Theorem \ref{effective_brenier_1} shows that there exists a computable convex function $\phi$ such that $f=\nabla\phi$. In the following subsections, we prove that under some additional assumptions on $\mu$, $\phi$ is actually $C^{1,\alpha}$ and thus $\nabla \phi$ is computable.

\subsubsection{Computability of $\nabla \phi$.}

For a given martingale $M$, we define a computable probability measure $\mu_M$ on $\zn$ by 
\[
\mu_M(\cylinder \sigma)=\lambda(\cylinder \sigma)\cdot M(\sigma) \text{ for all }\sigma \text{ with } |\sigma|=ns\text{ for some }s.
\]

\begin{lemma}

Let $M$ be a computable martingale and let $\mu_M$ be the corresponding probability measure on $\zn$. Let $z\in\zn$  and let $Z$ be the binary expansion of $z$. 
Suppose, the following two conditions hold:
\begin{enumerate}
\item[(P1)] the measure $\mu_M$ is absolutely continuous, not differentiable at $z$, and
\item[(P2)] $0<M(\sigma)<C$ for some fixed $C$ and all $\sigma$.
\end{enumerate}
Then the optimal transport map from $\mu_M$ to $\lambda$ is computable.
\end{lemma}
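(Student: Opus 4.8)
The plan is to establish that under (P1) and (P2) the potential $\phi$ with $\nabla\phi$ the optimal transport map from $\mu_M$ to $\lambda$ is in fact $C^{1,\alpha}_{\mathrm{loc}}$ on the interior of the cube, so that $\nabla\phi$ is not merely computable as an element of $C(\zn)$ (which Theorem \ref{effective_brenier_1} already gives for the pair $(\mu,\nu)=(\mu_M,\lambda)$) but is a genuinely computable function $\R^n\to\R^n$ in the sense of admitting a computable modulus of continuity uniformly in the point. First I would invoke the Caffarelli regularity theory for the Monge--Amp\`ere equation: since $\mu_M$ is absolutely continuous with density bounded above and below by positive constants (this is exactly what (P2) buys us, because $d\mu_M/d\lambda$ equals the martingale value $M$ on dyadic cells and $0<M(\sigma)<C$ forces $1/C' \le d\mu_M/d\lambda \le C$ off a null set, after normalization) and $\lambda$ likewise has density identically $1$, the Brenier potential transporting $\mu_M$ to $\lambda$ is a Alexandrov/viscosity solution of a Monge--Amp\`ere equation with bounded measurable right-hand side bounded away from $0$ and $\infty$, hence $\phi\in C^{1,\alpha}_{\mathrm{loc}}$ for some $\alpha>0$ by Caffarelli's theorem. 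I would cite this as a classical result (e.g. via \cite{Villani:03} or a standard Monge--Amp\`ere reference).

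Next I would argue the effectivity. From Theorem \ref{effective_brenier_1} we have a name for $\phi$ as a computable point of $C(\zn)$. The regularity statement gives a (non-effective a priori) $\alpha$ and local $C^{1,\alpha}$ bounds; what I need is that these can be chosen computably. Here I would note that the constants in Caffarelli's estimate depend only on $n$, the ellipticity ratio (the bounds on $d\mu_M/d\lambda$, which are computable from $C$), and the distance to the boundary, so the modulus of continuity of $\nabla\phi$ on any computably-given compact subset of $\intr{\zn}$ is computable. Combining a computable $C^{1,\alpha_0}$ modulus of continuity for $\nabla\phi$ with a computable name for $\phi$ itself yields a computable name for $\nabla\phi$: one recovers $\nabla\phi$ at a point by difference quotients of $\phi$ whose error is controlled by the modulus, all effectively. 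Finally, to get a function on all of $\R^n$ rather than on $\intr{\zn}$, I would extend $\nabla\phi$ (equivalently $\phi$) beyond the support in a computable Lipschitz-and-monotone way, exactly as in the proof of Theorem \ref{effective_brenier_1} where it is observed that the behaviour of $\phi$ outside $\mathrm{supp}(\mu)$ is immaterial; monotonicity of $\nabla\phi=f$ is automatic since it is the gradient of a convex function (Proposition \ref{monotone_lip_lemma}-style reasoning), and non-differentiability at $z$ was already shown above via Theorem \ref{thm:A2}.

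The main obstacle I anticipate is the effectivization of Caffarelli regularity: classical statements of $C^{1,\alpha}$ interior regularity for Monge--Amp\`ere are not phrased with explicit computable dependence of $\alpha$ and the Hölder seminorm on the data, so one must either track the constants through the proof or package the conclusion as "there is a computable modulus of continuity for $\nabla\phi$ on compacta" and justify that packaging. A clean way to finesse this is to observe that we do not need a uniform exponent $\alpha$ at all: it suffices that $\nabla\phi$ is continuous with a modulus of continuity that is computable from the name of $\phi$ and the constant $C$, and continuity of $\nabla\phi$ on $\intr{\zn}$ already follows from $C^1$ regularity (Caffarelli), while computability of the modulus follows because $\phi$ is convex and computable, so its gradient map is automatically computable at points of differentiability once one knows a priori that the set of non-differentiability is empty on the relevant open set. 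I would therefore structure the proof as: (i) (P2) $\Rightarrow$ ellipticity bounds on $d\mu_M/d\lambda$; (ii) Caffarelli $\Rightarrow$ $\phi\in C^1(\intr{\zn})$; (iii) convexity $+$ $C^1$ $+$ computable name for $\phi$ $\Rightarrow$ $\nabla\phi$ computable on $\intr{\zn}$; (iv) computable monotone extension to $\R^n$; (v) cite the earlier part of this subsection for non-differentiability at $z$.
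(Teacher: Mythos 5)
Your main route matches the paper's proof. The paper observes that $h = D_\lambda\mu_M$ is bounded away from $0$ and $\infty$ because of (P2), applies the Caffarelli-type interior regularity theorem (Theorem 4.13 in \cite{Villani:03}) to the Monge--Amp\`ere equation $\det D^2\phi = h$ to conclude $\phi \in C^{1,\alpha}$, and then argues that a H\"older continuous gradient that is a.e.\ computable (via difference quotients of the computable $\phi$ from Theorem \ref{effective_brenier_1}) is itself computable. Your items (i), (ii) plus the version of (iii) that retains the $C^{1,\alpha}$ modulus, together with (iv)--(v), are exactly that argument, stated with slightly more care about the provenance of the constants.

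Your proposed ``finesse,'' however, does not work. You claim that once one knows a priori that $\phi$ is $C^1$ on $\intr{\zn}$, the gradient is automatically a computable function because $\phi$ is convex and computable: ``computability of the modulus follows because $\phi$ is convex and computable.'' That inference is false. Convexity together with a computable name for $\phi$ lets you compute $\nabla\phi(x)$ at each individual rational interior point $x$ (the difference quotients are monotone in the step size and converge since $\phi$ is $C^1$), but it gives you no modulus of continuity for $\nabla\phi$, and pointwise computability at rationals without a computable modulus does not yield a computable element of $C(K)$. The obstruction is concrete: $\phi_\eps(x) = \sqrt{|x|^2 + \eps^2}$ is a family of smooth convex functions with a common Lipschitz constant (hence a common, trivially computable modulus for $\phi_\eps$), yet $\nabla\phi_\eps$ has modulus of continuity degenerating as $\eps \to 0$. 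So knowing $\phi$ is computable, convex, and $C^1$ on an open set is strictly weaker than knowing $\nabla\phi$ is computable there. The quantitative $C^{1,\alpha}$ bound from Caffarelli's theorem, with exponent and seminorm depending effectively on $n$, the ellipticity ratio (computable from $C$ via (P2)), and the distance to $\partial\zn$, is not a convenience to be finessed away---it is precisely what supplies the computable modulus that turns pointwise computation of difference quotients into computability of $\nabla\phi$ as a function. Stick with your original version of (iii), which is what the paper does.
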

\begin{proof}
We know that there is a computable convex function $\phi$ such that $\nabla \phi$ is the optimal transfer map of $\mu$ onto $\lambda$. Define $h(x)=D_\lambda\mu_M(x)$. Then $\phi$ is an Aleksandrov solution of the following instance of the Monge-Amp\'ere equation: 
\[
\det D^2 f=h.
\]
Since $h$ is bounded away from both $0$ and $\infty$, by Theorem 4.13 in \cite{Villani:03}, $\phi$ is $C^{1,\alpha}$ for some $\alpha>0$. Since $\nabla\phi$ is a.e. computable and H\"older continuous, it must be computable.
\end{proof}

To complete the proof we will describe a construction of a martingale $M$ satisfying the conditions of the previous lemma.

\subsubsection{Construction of the martingale $M$.}

\begin{lemma}
Suppose $z\in\zn$ is not computably random. There does exist a computable martingale $M$ that satisfies the (P1) and (P2) conditions. 
\end{lemma}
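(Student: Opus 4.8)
Since $z \in \zn$ is not computably random, there is a computable martingale $N$ that succeeds on the binary expansion $Z$ of $z$; by the usual saving trick we may assume $N$ has the savings property, so that $N(\sigma\tau) \geq N(\sigma) - 2^{-|\sigma|+1}$ for all $\sigma,\tau$, and in particular $N$ is bounded below by a positive constant along $Z$ but can still grow unboundedly. The obstacle is that $N$ need not satisfy (P2): the values $N(\sigma)$ may tend to $0$ or to $\infty$. The plan is to replace $N$ by a modified computable martingale $M$ that (i) still oscillates enough near $z$ to make $\mu_M$ non-differentiable at $z$ (condition (P1)), while (ii) staying within a fixed compact subinterval $[1/C, C]$ of $(0,\infty)$ (condition (P2)).

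First I would recall what (P1) really requires. The measure $\mu_M$ is absolutely continuous iff the $L^1$-bounded martingale $M$ converges $\lambda$-a.e. and in $L^1$ to its density; boundedness $0 < M < C$ already guarantees $L^1$-convergence, so under (P2) the measure $\mu_M$ is automatically absolutely continuous. For non-differentiability of $\mu_M$ at $z$ it suffices that the Radon--Nikodym derivative $D_\lambda\mu_M(z)$ fail to exist, i.e.\ that $\liminf_k M(Z\uh k) < \limsup_k M(Z\uh k)$ along the dyadic filtration refining $z$ (here I am using the cell decomposition $\A_n$, reading the coordinates of $z$ in round-robin fashion, so that $M(Z\uh{ns})$ plays the role of the conditional density on the cell containing $z$ at stage $s$). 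So the target is: build a computable martingale $M$ with values confined to $[1/C,C]$ whose values along $Z$ oscillate between two distinct limits.

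The construction: start with a martingale $N$ that succeeds on $Z$, and feed its "betting strength" into a bounded martingale that toggles. Concretely, I would run $N$ to watch when its capital along the current initial segment of $Z$ has doubled since the last "reset", and use those events to drive $M$ alternately up toward $C$ and down toward $1/C$. One clean way: let $M$ be obtained from $N$ by the transformation $M(\sigma) = F(\log_2 N(\sigma))$ where $F:\R \to [1/C, C]$ is a fixed Lipschitz "sawtooth" — but $F \circ \log_2 N$ is generally not a martingale, so instead I would define $M$ incrementally: at each node, $M$ bets a fraction of its current capital in the same \emph{direction} as $N$ (so it still makes money exactly when $N$ does, hence along $Z$ it keeps winning infinitely often), but with the fraction damped so that $M$ never leaves $[1/C,C]$; whenever $M$ reaches the top of its range it switches to betting the opposite way, and symmetrically at the bottom. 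Since $N$ succeeds on $Z$, the "capital doubled" events occur infinitely often along $Z$, so $M$ completes infinitely many up-sweeps and infinitely many down-sweeps along $Z$, giving $\liminf_k M(Z\uh{k}) \leq 1/C \cdot 2 < C/2 \leq \limsup_k M(Z\uh k)$ (constants adjustable), hence (P1). Computability of $M$ is immediate since $N$, $F$, and the bookkeeping are all computable, and (P2) holds by construction.

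The main obstacle I anticipate is making precise the step "$M$ oscillates maximally along $Z$ because $N$ succeeds on $Z$" while simultaneously keeping $M$ a genuine (bounded, computable) martingale: one must ensure that the damping of the betting fraction near the endpoints $1/C, C$ does not kill the win-rate so badly that $M$ fails to reach the opposite endpoint infinitely often. The fix is quantitative: since $N$ succeeds, $\sup_k N(Z\uh k) = \infty$, so the total logarithmic gain $N$ accrues along $Z$ is infinite; choosing the damping so that $M$'s logarithmic gain per step is a fixed fraction of $N$'s whenever $M$ is in the interior of $[1/C,C]$, the infinite total gain forces $M$ across the full interior infinitely often before the toggles, which is exactly what (P1) needs. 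Once that is nailed down the lemma follows, and combined with the two preceding lemmas it completes the proof of Theorem~\ref{thm:monotone}.
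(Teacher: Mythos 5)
The proposal has a genuine gap in the argument for (P1). You reduce non-differentiability of $\mu_M$ at $z$ to the oscillation $\liminf_k M(Z\uh k) < \limsup_k M(Z\uh k)$ along the dyadic filtration, but this is not sufficient. The derivative $D_\lambda\mu_M(z)$ is taken over balls $B_r(z)$ (equivalently, cubes centred at $z$), whereas $M(Z\uh k)$ is the density ratio over the dyadic cell containing $z$ — and $z$ may lie arbitrarily close to a face of that cell. A bounded martingale can oscillate boundedly along the dyadic refinement of $z$ while the symmetric ball averages still settle down, because the balls $B_r(z)$ straddle several cells of comparable scale and can average the oscillation away. In $\R^n$ this cube-versus-ball mismatch is a real obstruction, and it is precisely what the paper's proof works to overcome.

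The paper's resolution has two ingredients you are missing. First, the ``one third trick'': there is a universal $k_n$ and a finite set of translates $T_n=\{0,1/3,2/3\}^n$ such that for each small $r$ there exist $t\in T_n$ and $s$ with
\[
\cylinder{\px{Z^t}{sn}}\ \subseteq\ B_r(z+t)\ \subseteq\ \cylinder{\px{Z^t}{sn-k_n}},
\]
so one translate $t$ works for infinitely many $s$. Second, the martingale is designed so that whenever it enters its high (resp.\ low) phase it \emph{waits} there — maintaining its capital above $p$ (resp.\ below $q$) — until a pair of dyadic cubes satisfying the sandwich above is available; only then does the oscillation at the cube scale propagate to a genuine oscillation of $\mu_M(B_r(z))/\lambda(B_r(z))$, with constants $2^{-k_n}p$ and $2^{k_n}q$, which can be separated by choosing $p,q$. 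Nothing in your damping/toggling construction arranges for these waits, so even if your $M$ oscillates along $Z$ as claimed, you have not shown $D_\lambda\mu_M(z)$ fails to exist. Your identification of absolute continuity of $\mu_M$ as a consequence of (P2) (uniform integrability of a bounded martingale) is fine, and your idea of driving a bounded martingale with a succeeding one is essentially the mechanism behind the Freer--Kjos-Hanssen--Nies--Stephan construction the paper cites, so that portion is salvageable — but the cube-to-ball transfer is the step the lemma actually hinges on, and it is absent.
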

\begin{proof}

Again, let $Z$ denote binary expansion of $z$. We will modify the construction described in the proof of Theorem 4.2 in \cite{Freer.Kjos.ea:14}. 

The martingale $B$ constructed in the proof of Theorem 4.2 has the following properties relevant to us:
\begin{enumerate}
\item $1\le B(\sigma)\le 4$ for all $\sigma$. Let's call $1$ and $4$ the \emph{bounding constants of $B$}.  
\item $B$ has two distinct \emph{phases}: the \emph{up phase} (where it increases its capital) and the \emph{down phase} (where it decreases the capital). While $B$ is in the up phase, its capital reaches the value of $3$, and while $B$ is in the down phase, its capital reaches the value of $2$. 
The construction guarantees that the capital of $B$ along $Z$ oscillates - that is, $B$ alternates between two phases infinitely often. Let's call $2$ and $3$ the \emph{oscillation constants of $B$}.
\end{enumerate} 

The martingale $B$ satisfies the property $P2$, but not necessarily the $P1$ property since $D_\lambda\mu_B(z)$ might still exist despite oscillations of $B$ on the binary expansion of $z$.

The construction can be modified to define a variant of $B$ (let's call it $M$), that satisfies both properties.

Firstly, note that both bounding and oscillation constants are flexible. In particular, we may assume that oscillation constants of $M$ are some rational numbers $p>q>1$ and its bounding constants are $q-1$ and $p+1$. Also note that when the capital of $M$ reaches the value $>p$ ($<q$), $M$ can maintain its capital at the  $\ge p$ ($\le q$) level for as long as needed. 

Before explaining what conditions on $M$ guarantee both P1 and P2 hold, let us define some notation and recall one geometric fact about basic dyadic cubes.

\n Let $n\in \N$. Define $\D_n$ to be the set of all (basic) dyadic cubes in $\real^n$. Let $T_n=\{0,1/3,2/3\}^n$. For every $t\in T_n$ define $\D^t_n=\left\{Q+t:Q\in \D_n \right\}$ and denote by $Z^t$ the binary expansion of $z+t$.

\n The following fact is known as the ``one third trick''.

\begin{fact} There is a universal constant $K_n>0$ such that for any ball $B\subset\real^n$ with radius $r < 1/3$, there is $t\in T_n$ and a cube $Q\in\D_n^t$ containing $B$ whose radius is no more than $K_n\cdot r$.
\end{fact}

(\textbf{TODO: replace with a variant of Theorem 3.8 from Olli Tappiola's thesis and name the constants appropriately})

A simple consequence of this fact is that there exists $k_n\in\N$, such that for any sufficiently small $r$ the following holds 
\begin{align}\label{l1}
\cylinder {\px {Z^t}{sn}}\subseteq B_r(z+t)\subseteq \cylinder {\px {Z^t}{sn-k_n}}
\end{align}

for some $t\in T_n$ and $s\in\N$.

Thus, for some particular value of $t$, ($\ref{l1}$) holds for infinitely many $s$. Since the operation $f\mapsto f+t$ preserves monotonicity and computability, we may assume ($\ref{l1}$) holds infinitely often for $t=0$. 

Suppose ($\ref{l1}$) holds for some $s,r$ and $M(\px {Z}{sn-k_n})\le q$. Let $D_2=\cylinder {\px {Z}{sn}},B= B_r(z)$ and $ D_1= \cylinder {\px {Z}{sn-k_n}}$. Note that 
\[
2^{-k_n}\le \frac{\lambda(B)}{\lambda(D_1)}\le 1\text{ and } 2^{-k_n} \le \frac{\lambda(D_2)}{\lambda(B)}\le 1.
\]

Then we have
\[
\frac {\mu(B)}{\lambda(B)}=\frac{\mu(D_1)-\mu(D_1\setminus B)}{\lambda(D_1)-\lambda(D_1\setminus B)}\le\\
\frac{\mu(D_1)}{\lambda(D_2)}=2^{k_n}\cdot \frac{\mu(D_1)}{\lambda(D_1)}\le 2^{k_n}\cdot q.
\]

Analogously, assuming $M(\px {Z}{sn})\ge p$, we get 
\[
\frac {\mu(B)}{\lambda(B)}=\frac{\mu(D_2)+\mu(B\setminus D_2)}{\lambda(D_1)}\ge\\
\frac{\mu(D_2)}{\lambda(D_1)}=2^{-k_n}\cdot \frac{\mu(D_2)}{\lambda(D_2)}\ge 2^{-k_n}\cdot p.
\]

Hence the following three conditions  imply that $D_\lambda \mu(z)$ does not exist:
\begin{enumerate}
\item $2^{-k_n}\cdot p - 2^{k_n}\cdot q>0$, 
\item $M(\px {Z}{sn})\ge p$ and $\cylinder {\px {Z}{sn}}\subseteq B_r(z)\subseteq \cylinder {\px {Z}{sn-k_n}}$ hold for infinitely many $s$, and
\item $M(\px {Z}{sn-k_n})\le q$ and $\cylinder {\px {Z}{sn}}\subseteq B_r(z)\subseteq \cylinder {\px {Z}{sn-k_n}}$ hold for infinitely many $s$.
\end{enumerate}

The first condition is trivially met by setting $p$ and $q$ appropriately. 
To ensure the second condition is met, the martingale $M$, once it is in the up phase and its capital is $> p$, waits (maintaining the value of its capital $> p$) until it is clear that there are two basic dyadic cubes satisfying \ref{l1} (this can be done by checking that the distance between boundaries of two dyadic cubes is large enough). 

The third condition can be dealt with in an analogous manner.
\end{proof}

\def\cprime{$'$}

\bibliographystyle{plain}

\end{document}